\theoremstyle{plain}
\newtheorem{theorem}{Theorem}
\newtheorem{proposition}[theorem]{Proposition}
\newtheorem{lemma}[theorem]{Lemma}
\theoremstyle{definition}
\newtheorem{remark}[theorem]{Remark}
\newtheorem{definition}[theorem]{Definition}
\newcommand{\match}{\mathcal{M}}
\definecolor{green}{RGB}{34, 139, 34}
\definecolor{darkpink}{RGB}{245,75,200}
\definecolor{uw_purple}{RGB}{95,45,156} 
\title{Perfect Matching Complexes of Honeycomb Graphs}
\author{Margaret Bayer\\Department of Mathematics\\University of Kansas\\Lawrence, Kansas, U.S.A.\\bayer@ku.edu \and Marija Jeli\'{c} Milutinovi\'{c}\\Faculty of Mathematics\\University of Belgrade\\ Belgrade, Serbia\\marijaj@matf.bg.ac.rs \and  Julianne Vega\thanks{This article is based on work supported by the National Science Foundation under Grant No.\ DMS-1440140 while the authors participated in the 2020/2021 Summer Research in Mathematics Program of the Mathematical Sciences Research Institute, Berkeley, California. M.\ Jeli\'{c} Milutinovi\'{c} has been supported by the Project No.\ 7744592 MEGIC
”Integrability and Extremal Problems in Mechanics, Geometry and
Combinatorics” of the Science Fund of Serbia, and by the Faculty of Mathematics University of Belgrade through the grant (No.\ 451-03-68/2022-14/200104) by the Ministry of
Education, Science, and Technological Development of the Republic of Serbia.}
\\Department of Mathematics\\Maret School\\Washington, D.C. U.S.A.\\jvega@maret.org}
\begin{document}
\maketitle
\begin{abstract}
The {\em perfect matching complex} of a graph is the simplicial complex on the edge set of the graph with facets corresponding to perfect matchings of the graph.  This paper studies the perfect matching complexes, $\match_p(H_{k \times m\times n})$, of honeycomb graphs.  For $k = 1$, $\match_p(H_{1\times m\times n})$ is contractible unless $n\ge m=2$, in which case it is homotopy equivalent to the $(n-1)$-sphere.  Also, $\match_p(H_{2\times 2\times 2})$ is homotopy equivalent to the wedge of two 3-spheres.  The proofs use discrete Morse theory.
\end{abstract}

\footnotetext{2020 Mathematics Subject Classification: 05C70 (05E45, 55P15, 57M15)\\

Keywords: perfect matching, simplicial complex, honeycomb graph, homotopy type, plane partition}

\section{Introduction and Background}
There has been a great deal of research on the topology of simplicial complexes associated
with graphs.
Much of the early work in this area concerned matching complexes of
complete graphs and of complete bipartite graphs, called ``the
matching complex'' and ``chessboard complexes,'' respectively. See the survey \cite{wachs} by Wachs, and references therein. 
While some other complexes associated with graphs were studied around the same
time (e.g., \cite{kozlov-directed}), much activity was initiated by the
dissertation and subsequent book of Jonsson \cite{jonsson-book}.  In particular, there has
been much work on independence complexes of graphs
(for example, \cite{adamaszek-split,adamaszek-stacho,
ehrenborg,engstrom,matsushita2022}).
The more recent work on matching complexes of various types of graphs includes line tilings ~\cite{bjv-line,matsushita2019}, grid graphs~\cite{Braun_Hough,goyal2021matching,matsushita2018}, and more~\cite{bayer,JelicEtAl,MR2475001,nikseresht,Vega}.
One particular type of graph is the
honeycomb graph (a planar graph that is a tiling of hexagons), studied, for
example, in \cite{JelicEtAl,matsushita2019}.
In this paper we focus on honeycomb graphs
and on a subcomplex of the matching complex, generated by
the faces corresponding to perfect matchings.

\subsection{Perfect matching complexes}


A {\em matching} of a simple graph $G$ is a set
of edges of $G$, no two of which share a vertex.
The {\em matching complex} of $G$, denoted by  $\match(G)$, is the 
simplicial complex whose vertex set is the set of edges of $G$, and whose 
facets (maximal faces) correspond to maximal matchings of $G$. 
\begin{definition}\label{perfect_matching_complex}
 Let $G$ be a simple graph.  A {\em perfect matching} of $G$ is a matching that covers all vertices of the graph.
The {\em perfect matching complex} of $G$, denoted by  $\match_p(G)$, is the 
subcomplex of the matching complex whose 
facets correspond to perfect matchings of $G$. 
\end{definition}

Thus, if $\sigma = \{e_1, \ldots, e_k\}$ is an arbitrary
subset of the edges $E(G)$, $\sigma$ is a simplex (face) in the complex  
$\match_p(G)$ if and only if there is a perfect matching $P$ of $G$ such that 
$\sigma \subseteq P$.  

If the graph $G$ admits at least one perfect matching, the perfect matching complex $\match_p(G)$ is a full-dimensional subcomplex of the matching complex 
$\match(G)$. Otherwise, if there is no perfect matching of the graph $G$, we consider the perfect matching complex to be void.
Note that even finding the number of perfect matchings of a graph is not easy.  The number of perfect matchings of a graph can be expressed in terms of permanents of associated matrices \cite{li}. It is known that computing the permanent is $\# P$-complete \cite{valiant}.

 It turns out that the perfect matching complexes of complete graphs, complete bipartite graphs, paths and cycles are either their entire matching complexes, or complexes that can easily be  determined. Whenever $G$ has an odd number of vertices, the perfect matching complex is void, so we focus only on graphs with even number of vertices.

\begin{itemize}
    \item {\em Complete graph} $G=K_{2n}$. Every matching is a subset of at least one perfect matching, so  $\match_p(K_{2n}) = \match(K_{2n}).$

    \item {\em Complete bipartite graph} $G = K_{m,n}$. If $m \neq n$, $G$ does not contain a perfect matching, so $\match_p(K_{m,n})$ is the void complex. When $m=n$, every matching is a subset of a perfect matching, and $\match_p(K_{n,n}) =\match(K_{n, n}).$
    \item {\em Path} $G = P_{2n}$. If we denote the vertices of the path by $a_1, a_2, \ldots, a_{2n}$ respectively, there is only one perfect matching on $G$, containing edges
    $\{a_{2i-1}, a_{2i}\},$ $i \in \{1, \ldots,  n\}.$ Therefore $\match_p(P_{2n})$ is a simplex $\Delta^{n-1}$ on $n$ vertices.
    \item {\em Cycle} $G = C_{2n}$. There are exactly two perfect matchings on $C_{2n},$ and they are disjoint, so the complex $\match_p(C_{2n})$ is the disjoint union of two simplices of dimension $n-1$, and $\match_p(C_{2n})$ is homotopy equivalent to the 0-sphere $S^0.$

\end{itemize}

\subsection{Bijection between perfect matchings of honeycombs and plane partitions}


A \textbf{honeycomb graph} $H = H_{k \times m \times n}$ is a hexagonal tiling whose congruent, opposite sides are of length $k, m$ and $n$ hexagons.
In his exploration of the topology of matching complexes \cite{jonsson-book}, Jonsson suggests honeycomb graphs to be of interest for further study. In addition, perfect matchings in honeycomb graphs are of interest in chemistry \cite{klein}.  In 2019, discrete Morse theory was used to determine the connectedness bounds of $d$-dimensional faces of matching complexes of $1 \times 1 \times n $ and $2 \times 1 \times n$ honeycomb graphs for $n \geq 1$ \cite{JelicEtAl}. It was later shown by Matsushita that the homotopy type for matching complexes of $1 \times 2 \times n$ honeycomb graphs is a wedge of spheres \cite{matsushita2019}. Beyond the line of hexagons, the homotopy types of matching complexes of honeycomb graphs have been quite elusive. In this paper, we will consider the perfect matching complex of a honeycomb graph and prove that this subcomplex for $H_{1 \times m \times n}$ is contractible or homotopy equivalent to a sphere using the bijection between perfect matchings on honeycomb graphs and plane partitions. 

A \textbf{plane partition} is a two dimensional array of integers that are non-increasing moving from left to right and top to bottom. We define a plane partition $P_{k \times m \times n}$ through a $k \times m$ matrix whose entries are less than or equal to $n$ and follow the non-increasing conditions. A plane partition can be visualized as a pile of unit cubes in the positive octant of $\mathbb{R}^3$ following the non-increasing conditions. 
The perfect matchings of a honeycomb graph are in bijection with the rhombus tilings of a hexagonal region of equilateral triangles, which are in bijection with plane partitions. 
See, for example, \cite{kuperberg}, which gives the number of plane partitions (and hence of perfect matchings) of honeycomb graphs under various symmetry groups.
For an example see Figure~\ref{fig:honeycomb_partition}. 
on page~\pageref{figure4}.
We will use this well-known bijection to determine the homotopy type of 
the perfect matching complexes of $H_{1 \times m \times n}$ and 
$H_{2\times 2\times 2}$.





\section{Hexagonal line tiling}

In this section we use the
nerve theorem, an important theorem in topology, to find the homotopy type of the perfect matching complex of a line of hexagons. For an exposition in the combinatorial context of
simplicial complexes, see \cite{bjorner-topol}.
\begin{definition}
The {\em nerve} of a family of sets $(A_i)_{i\in I}$ is the simplicial complex
${\cal N}(A_i)$ with vertex set $I$ and $\sigma\subseteq I$ a face of 
${\cal N}(A_i)$ if and only if $\bigcap_{i\in\sigma} A_i \ne \emptyset$.
\end{definition}
\begin{theorem}
Suppose $\Delta$ is a simplicial complex, and $(\Delta_i)_{i\in I}$ is a family
of subcomplexes such that $\Delta = \bigcup_{i\in I} \Delta_i$.
If every nonempty finite intersection 
$\bigcap_{i\in J} \Delta_i$ ($J\subseteq I$) is contractible, then 
$\Delta$ and the nerve ${\cal N}(\Delta_i)$ are homotopy equivalent.
\end{theorem}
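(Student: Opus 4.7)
The plan is to argue by induction on the size of a finite index set $I$ (the general case reduces to this by a direct-limit argument, since both $\Delta$ and ${\cal N}(\Delta_i)$ are colimits of their finite sub-covers). When $|I|=1$, $\Delta = \Delta_1$ is contractible by hypothesis and ${\cal N}(\Delta_i)$ is a single point, so there is nothing to show. For the inductive step, fix $n \in I$, set $I' = I \setminus \{n\}$, $\Delta' = \bigcup_{i \in I'} \Delta_i$, and ${\cal N}' = {\cal N}((\Delta_i)_{i \in I'})$, and decompose $\Delta = \Delta' \cup \Delta_n$.

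The key observation is that the family $(\Delta_n \cap \Delta_i)_{i \in I'}$ is a cover of $\Delta_n \cap \Delta'$ whose nonempty finite intersections $\Delta_n \cap \bigcap_{i \in J} \Delta_i$ are contractible by hypothesis, and whose nerve is precisely the link $\mathrm{lk}(n, {\cal N}(\Delta_i))$ (since $J \cup \{n\}$ is a face of ${\cal N}(\Delta_i)$ iff $\Delta_n \cap \bigcap_{i \in J} \Delta_i \neq \emptyset$). Applying the inductive hypothesis twice yields $\Delta' \simeq {\cal N}'$ and $\Delta_n \cap \Delta' \simeq \mathrm{lk}(n, {\cal N}(\Delta_i))$, while $\Delta_n$ is itself contractible and corresponds to the closed star $\mathrm{star}(n, {\cal N}(\Delta_i))$, which is also contractible as a cone on the link.

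Both $\Delta$ and ${\cal N}(\Delta_i)$ are then expressed as pushouts along cofibrations,
\[ \Delta = \Delta' \cup_{\Delta_n \cap \Delta'} \Delta_n, \qquad {\cal N}(\Delta_i) = {\cal N}' \cup_{\mathrm{lk}(n, {\cal N}(\Delta_i))} \mathrm{star}(n, {\cal N}(\Delta_i)), \]
so the gluing lemma for homotopy pushouts produces the desired equivalence $\Delta \simeq {\cal N}(\Delta_i)$. The main obstacle is that the three inductive equivalences must be chosen \emph{coherently}, i.e.\ commuting up to homotopy with the inclusions into the two pushout diagrams, before the gluing lemma applies.

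The cleanest way to resolve this coherence issue — and the route I would ultimately take — is to build a single auxiliary complex $X$ whose simplices are pairs $(\sigma, J)$ with $J \in {\cal N}(\Delta_i)$ and $\sigma \in \bigcap_{i \in J} \Delta_i$, together with its two natural projections $p_\Delta : X \to \Delta$, $(\sigma,J) \mapsto \sigma$, and $p_{\cal N} : X \to {\cal N}(\Delta_i)$, $(\sigma, J) \mapsto J$. One then verifies that the fiber of $p_{\cal N}$ over a face $J$ is (up to subdivision) $\bigcap_{i \in J} \Delta_i$, contractible by hypothesis, while the fiber of $p_\Delta$ over $\sigma$ is the full simplex on $\{i : \sigma \in \Delta_i\}$, also contractible. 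An application of Quillen's Theorem A (or, equivalently, a second induction using the gluing lemma) shows both projections are homotopy equivalences, yielding $\Delta \simeq X \simeq {\cal N}(\Delta_i)$ and bypassing the need to track coherence by hand.
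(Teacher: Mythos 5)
The paper does not actually prove this statement: it is the classical Nerve Theorem, quoted as background with a pointer to Bj\"orner's survey, so there is no in-paper argument to compare against; your proposal should therefore be judged on its own. In its final form it is a correct, standard proof. The first half (induction on $|I|$ plus the gluing lemma for homotopy pushouts) is the textbook outline, and you correctly identify its real difficulty: the three inductively produced equivalences must commute, up to homotopy and in a sufficiently strong sense, with the inclusions before the gluing lemma applies. Be aware that the very same coherence problem infects your proposed direct-limit reduction to finite $I$, since pushing an equivalence through a filtered colimit also requires the equivalences for the various finite subcovers to be compatible; so the first route is not merely inelegant but genuinely incomplete as written. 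The second route repairs both defects at once: the blowup complex $X$ (equivalently, the homotopy colimit of $J\mapsto\bigcap_{i\in J}\Delta_i$ over the face poset of the nerve) is defined functorially for arbitrary $I$, and Quillen's Theorem A applied to the two projections does give $\Delta\simeq X\simeq{\cal N}(\Delta_i)$. Concretely, for $p_{\cal N}$ the relevant fiber over a face $J$, namely the subposet of pairs $(\sigma',J')$ with $J'\supseteq J$, retracts via $(\sigma',J')\mapsto(\sigma',J)$ onto the face poset of $\bigcap_{i\in J}\Delta_i$, which is contractible by hypothesis; for $p_\Delta$ the fiber over $\sigma$ retracts onto the full simplex on $\{i:\sigma\in\Delta_i\}$, which is nonempty because the $\Delta_i$ cover $\Delta$ and is genuinely a simplex of ${\cal N}(\Delta_i)$ because every nonempty subset of that index set has nonempty intersection (it contains $\sigma$). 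Two details to make explicit in a write-up: $X$ should be defined as a poset of pairs (or as a polyhedral subcomplex of $\Delta\times{\cal N}(\Delta_i)$) rather than by the loose phrase ``simplices are pairs,'' and the contractibility hypotheses of Theorem A concern these comma-category fibers $p^{-1}(Q_{\ge q})$, not literal point preimages.
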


\begin{theorem}
Let $H_n = H_{1\times 1 \times n}$ be the graph of a line of $n$ hexagons.  
Let $\match_p(H_n)$ be the perfect matching complex of $H_n$.
For $n\ge 2$,  $\match_p(H_n)$ is contractible.
\end{theorem}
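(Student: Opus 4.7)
The plan is to apply the nerve theorem just stated to the cover of $\match_p(H_n)$ by its (closed) facet simplices. There are $n+1$ perfect matchings $M_0, M_1, \ldots, M_n$ (this is the number of $1 \times 1$ plane partitions with entries in $\{0, 1, \ldots, n\}$), and each one gives a simplex $\Delta_k$ on the $2n+1$ edges of $M_k$. Every face of $\match_p(H_n)$ lies in some facet, so $\match_p(H_n) = \bigcup_k \Delta_k$; any nonempty intersection of these simplices is the simplex on the common edges, hence contractible. By the nerve theorem, $\match_p(H_n)$ is homotopy equivalent to the nerve $\mathcal{N}$ of this cover.

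The substantive step is to describe $M_k$ explicitly. Draw the hexagons with pointy tops and bottoms so that consecutive hexagons share a vertical edge, and number the $n+1$ vertical edges $V_0, V_1, \ldots, V_n$ from left to right. I claim that $M_k$ consists of $V_k$ together with the two ``left-slanted'' edges (top-to-upper-left and bottom-to-lower-left) of each of hexagons $1, \ldots, k$ and the two ``right-slanted'' edges of each of hexagons $k+1, \ldots, n$. This is directly checked to be a perfect matching, and a short induction on hexagons---starting from the top vertex of hexagon $1$ and observing that as soon as a hexagon is right-slanted, every hexagon to its right is forced to be right-slanted, and symmetrically from the other end---shows that every perfect matching of $H_n$ arises this way.

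With this description in hand, fix $J = \{k_1 < \cdots < k_r\} \subseteq \{0, 1, \ldots, n\}$ with $r \ge 2$. The verticals $V_{k_i}$ are pairwise distinct, so no vertical lies in every $M_{k_i}$, while the left-slanted edges of hexagons $1, \ldots, k_1$ and the right-slanted edges of hexagons $k_r+1, \ldots, n$ do. Hence $\bigcap_{k \in J} M_k = \emptyset$ exactly when $k_1 = 0$ and $k_r = n$, i.e., when $\{0, n\} \subseteq J$. The nerve $\mathcal{N}$ is therefore the simplicial complex on $\{0, 1, \ldots, n\}$ with facets $\{0, 1, \ldots, n-1\}$ and $\{1, 2, \ldots, n\}$---two $(n-1)$-simplices meeting in the $(n-2)$-simplex $\{1, \ldots, n-1\}$. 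For $n \ge 2$ this intersection is a nonempty (contractible) simplex, so $\mathcal{N}$ is the union of two contractible simplices glued along a contractible subspace, and hence contractible. The main obstacle is the combinatorial identification of the $M_k$; once that is done, the nerve computation is immediate.
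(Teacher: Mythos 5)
Your proposal is correct and follows essentially the same route as the paper: both cover $\match_p(H_n)$ by its $n+1$ facet simplices (indexed by which vertical edge the perfect matching contains), apply the nerve theorem, and identify the nerve as two $(n-1)$-simplices on $\{0,\dots,n-1\}$ and $\{1,\dots,n\}$ glued along their common $(n-2)$-face, hence contractible for $n\ge 2$. The only difference is cosmetic: you verify the nonempty intersections for all index sets $J$ explicitly, while the paper records the two maximal faces and the fact that $A_0\cap A_n=\emptyset$.
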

\begin{proof}
Let $n\ge 2$.
Label the graph $H_n$  as in  Figure~\ref{hexagon_line}.

\begin{figure}
\begin{center}
\begin{tikzpicture}
\draw[line width = 0.5 pt] (0,2)--(1,3); \node[font=\scriptsize] at (0.25,2.5) {$a_{1,1}$};
\draw[line width = 0.5 pt] (1,3)--(2,2); \node[font=\scriptsize] at (1.25,2.5) {$d_{1,1}$};
\draw[line width = 0.5 pt]  (2,2)--(2,1); \node[font=\scriptsize] at (1.75, 1.5){$b_{1,1}$};
\node[] at (1, 1.25){$T_{1,1}$};
\draw[line width = 0.5 pt]  (2,1)--(1,0); \node[font=\scriptsize] at (1.25, 0.5) {$a_{1,0}$};
\draw[line width = 0.5 pt]  (1,0)--(0,1); \node[font=\scriptsize] at (0.25, 0.5) {$d_{0,0}$};
\draw[line width = 0.5 pt]  (0,1)--(0,2); \node[font=\scriptsize] at (-0.25, 1.5) {$b_{0,1}$};
\draw[line width = 0.5 pt] (2,2)--(3,3); \node[font=\scriptsize] at (2.25, 2.5) {$a_{2,1}$};
\draw[line width = 0.5 pt]  (3,3)--(4,2); \node[font=\scriptsize] at (3.25, 2.5) {$d_{2,1}$};
\draw[line width = 0.5 pt] (4,2)--(4,1); \node[font=\scriptsize] at (3.75, 1.5) {$b_{2,1}$};
\node[] at (3, 1.25){$T_{2,1}$};
\draw[line width = 0.5 pt] (4,1)--(3,0); \node[font=\scriptsize] at (3.25, 0.5) {$a_{2,0}$};
\draw[line width = 0.5 pt] (3,0)--(2,1); \node[font=\scriptsize] at (2.25, 0.5) {$d_{1,0}$};
\draw[line width = 0.5 pt] (10,2)--(11,3); 
\node[font=\scriptsize] at (10,2.5) {$a_{n-1,1}$};
\draw[line width = 0.5 pt] (11,3)--(12,2); \node[font=\scriptsize] at (11.1, 2.5) {$d_{n-1,1}$};
\draw[line width = 0.5 pt] (12,2)--(12,1); \node[font=\scriptsize] at (11.6, 1.5) {$b_{n-1,1}$};
\node[] at (11, 1.25){$T_{n-1,1}$};
\draw[line width = 0.5 pt] (12,1)--(11,0); \node[font=\scriptsize] at (11.1, 0.5) {$a_{n-1,0}$};
\draw[line width = 0.5 pt] (11,0)--(10,1); \node[font=\scriptsize] at (10.1, 0.5) {$d_{n-2,0}$};
\draw[line width = 0.5 pt] (10,1)--(10,2); \node[font=\scriptsize] at (9.6, 1.5) {$b_{n-2,1}$};
\draw[line width = 0.5 pt] (12,2)--(13 ,3); \node[font=\scriptsize] at (12.25, 2.5) {$a_{n,1}$};
\draw[line width = 0.5 pt] (13,3)--(14,2); \node[font=\scriptsize] at (13.25, 2.5) {$d_{n,1}$};
\draw[line width = 0.5 pt] (14,2)--(14,1); \node[font=\scriptsize] at (13.75, 1.5) {$b_{n,1}$};
\node[] at (13, 1.25){$T_{n,1}$};
\draw[line width = 0.5 pt] (14,1)--(13,0); \node[font=\scriptsize] at (13.75, 0.5) {$a_{n,0}$};
\draw[line width = 0.5 pt] (13,0)--(12,1); \node[font=\scriptsize] at (12.1, 0.5) {$d_{n-1,0}$};
\filldraw[black] (0,1) circle (2pt);
\filldraw[black] (0,2) circle (2pt);
\filldraw[black] (1,0) circle (2pt);
\filldraw[black] (1,3) circle (2pt);
\filldraw[black] (2,1) circle (2pt);
\filldraw[black] (2,2) circle (2pt);
\filldraw[black] (3,0) circle (2pt);
\filldraw[black] (3,3) circle (2pt);
\filldraw[black] (4,1) circle (2pt);
\filldraw[black] (4,2) circle (2pt);
\filldraw[black] (6,1.5) circle (2pt);
\filldraw[black] (7,1.5) circle (2pt);
\filldraw[black] (8,1.5) circle (2pt);
\filldraw[black] (10,1) circle (2pt);
\filldraw[black] (10,2) circle (2pt);
\filldraw[black] (11,0) circle (2pt);
\filldraw[black] (11,3) circle (2pt);
\filldraw[black] (12,1) circle (2pt);
\filldraw[black] (12,2) circle (2pt);
\filldraw[black] (13,0) circle (2pt);
\filldraw[black] (13,3) circle (2pt);
\filldraw[black] (14,1) circle (2pt);
\filldraw[black] (14,2) circle (2pt);
\end{tikzpicture}
\end{center}
\caption{Labeled line of $n$ hexagons}\label{hexagon_line}
\end{figure}
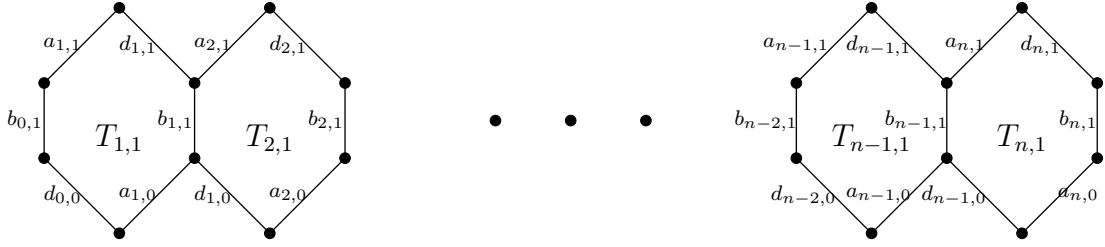


For each vertical segment $b_{i,1}$ ($0\le i\le n$), there is a unique perfect matching for $H_n$ containing $b_{i,1}$.  This is the matching

\noindent
$\{a_{1,1},d_{0,0},a_{2,1},d_{1,0},\ldots, a_{i,1},d_{i-1,0}, b_{i,1}, d_{i+1,1},a_{i+1,0}, \ldots
  d_{n-1,1},a_{n-1,0},d_{n, 1},a_{n,0}\}$ (omitting the $a_{1,1},d_{0,0}$ for ${i=0}$, and omitting the
$d_{n,1},a_{n,0}$ for $i=n$).
Let $A_i$ be the corresponding simplex (facet) in the perfect matching complex
$\match_p(H_n)$, $i \in \{0,1, \ldots, n\}$.
Let ${\cal N}(\match_p(H_n))$ be the nerve complex of
$\match_p(H_n)$ with vertices $A_i$.
For all $i$, $0\le i\le n-1$, $A_i$ contains the vertices $d_{n,1},a_{n,0}$, so
$\{A_0,A_1,A_2,\ldots, A_{n-1}\}$ is a simplex in ${\cal N}(\match_p(H_n))$.
For all $i$, $1\le i\le n$, $A_i$ contains the vertices $a_{1,1},d_{0,0}$, so
$\{A_1,A_2,\ldots, A_n\}$ is a simplex in ${\cal N}(\match_p(H_n))$.
Since $A_0\cap A_n=\emptyset$ in $\match_p(H_n)$, ${\cal N}(\match_p(H_n))$ is the suspension over the
simplex $\{A_1,A_2,\ldots, A_{n-1}\}$, and so is contractible. The conditions of the nerve lemma are satisfied because a nonempty intersection of simplices in a complex is contractible. Therefore, the perfect matching complex $\match_p(H_n)$ is contractible.
\end{proof}

\section{The $1 \times m \times n$ honeycomb graph}

\subsection{A short summary of discrete Morse theory}
Our subsequent calculations of homotopy type rely on discrete Morse theory. 
Developed by Forman, discrete Morse theory is a way to find the homotopy type of complexes by pairing faces of the complex~\cite{forman2002user}. These pairings correspond with a sequence of collapses on the complex, resulting in a homotopy equivalent cell complex. 

In what follows we will say two faces are \textit{paired} in place of the usual phrasing of two faces are \textit{matched} in a discrete Morse matching, in order to avoid unnecessary confusion with perfect matchings of a graph. 

\begin{definition} 
A \textbf{partial pairing} in a poset $P$ is a partial pairing on the underlying graph of the Hasse diagram of $P$. In other words, it is a subset $M \subseteq P \times P$ such that: 
\begin{itemize} 
\item $(a,b) \in M$ implies $a \prec b$ and 
\item each $c \in P$ belongs to at most one pair in $M$. 
\end{itemize} 
When $(a,b) \in M$ we write $b = u(a)$. A partial pairing is \textbf{acyclic} if there does not exist a cycle 
\[
a_1 \prec u(a_1) \succ a_2 \prec u(a_2) \succ \cdots \prec u(a_m) \succ a_1
\]
with $m \geq 2$ and $a_i \in P$ distinct. 
\end{definition} 
Given an acyclic partial pairing $M$ on a poset $P$, we call an element \textbf{critical} if it is unpaired. The main theorem of discrete Morse theory describes the essence of these sequences of collapses.

\begin{theorem}[\cite{forman2002user}]\label{DMT}
Let $\Delta$ be a polyhedral cell complex and let $M$ be an acyclic pairing on the face poset of $\Delta$. Let $c_i$ denote the number of critical $i$-dimensional cells of $\Delta$. The space $\Delta$ is homotopy equivalent to a cell complex $\Delta_c$ with $c_i$ cells of dimension $i$ for each $i \geq 0$, plus a single $0$-dimensional cell in the case where the empty set is paired in the matching. 
\end{theorem}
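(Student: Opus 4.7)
The plan is to realize the acyclic pairing $M$ as an explicit sequence of elementary collapses on $\Delta$, leaving exactly the critical cells behind; the resulting complex is then the $\Delta_c$ of the statement. Recall that an elementary collapse of a polyhedral complex removes a \emph{free pair} $(\sigma,\tau)$---a codimension-one face $\sigma$ of a cell $\tau$ whose only coface in the current complex is $\tau$---and realizes a deformation retraction onto the smaller complex. Thus the goal is to order the matched pairs as $(\sigma_1,\tau_1),\ldots,(\sigma_k,\tau_k)$ so that, after removing the first $j-1$ pairs, the pair $(\sigma_j,\tau_j)$ is free in the current subcomplex, and then to iterate.

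The technical heart of the argument is producing this ordering from the hypothesis of acyclicity. I would encode $M$ in the oriented Hasse diagram $D_M$ whose cover relation $a \prec b$ becomes an arc $b \to a$ when $(a,b) \notin M$ and an arc $a \to b$ when $(a,b) \in M$; the acyclicity of $M$ translates verbatim into the statement that every directed cycle in $D_M$ uses only non-$M$ arcs. A topological-sort argument then yields a matched pair $(\sigma_1,\tau_1)$ with $\tau_1$ maximal among remaining cells and $\sigma_1$ having no other coface in $\Delta$---precisely a free pair. The no-cycle condition is indispensable here: it rules out the configuration in which $\sigma_1$ would be forced to be incident to a second coface matched below $\tau_1$. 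This selection lemma is the main obstacle, but once established it applies inductively to the reduced complex to yield the full ordering.

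With the ordering in hand, the induction on $k$ runs smoothly. After performing the first collapse, the reduced complex $\Delta'$ is homotopy equivalent to $\Delta$, the restriction of $M$ to $\Delta'$ is still acyclic with the same set of critical cells, and the inductive hypothesis produces a CW complex with one cell per remaining critical face, with attaching maps inherited from $\Delta$ composed with the deformations from the earlier collapses. Stringing the collapses together gives $\Delta \simeq \Delta_c$ with face numbers $c_i$ in each dimension.

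The boundary case in which $\emptyset$ is paired with some vertex $v$ must be treated separately since the empty face is not a cell of $\Delta$. The collapsing scheme nonetheless eliminates $v$, so to obtain a CW complex of the correct homotopy type one must attach a single $0$-cell as a basepoint, accounting for the extra cell in the statement.
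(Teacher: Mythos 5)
The paper gives no proof of this theorem; it is quoted from Forman, so your proposal has to be measured against the standard argument there. The central step of your plan --- the ``selection lemma'' asserting that acyclicity lets you order the matched pairs $(\sigma_1,\tau_1),\ldots,(\sigma_k,\tau_k)$ so that each is a free pair of the complex obtained by deleting its predecessors --- is false, and the strategy of realizing $M$ by elementary collapses of $\Delta$ itself cannot work. The obstruction is not acyclicity (which only controls directed paths through matched cells) but the \emph{critical} cells: a matched cell $\sigma_j$ may be a proper face of a critical cell, which is never removed, so $\sigma_j$ never becomes free. Concretely, take $\Delta=\partial\Delta^2$, the hollow triangle with vertices $v_0,v_1,v_2$ and edges $e_{01},e_{12},e_{02}$, and the acyclic pairing $M=\{(v_1,e_{01}),(v_2,e_{12})\}$ with critical cells $v_0$ and $e_{02}$. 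Every vertex of this complex has exactly two cofaces, so $\Delta$ has no free face whatsoever and admits no elementary collapse; yet the theorem correctly predicts $\Delta\simeq S^1$, a CW complex with one $0$-cell and one $1$-cell. Your topological sort of the reoriented Hasse diagram does produce a linear extension compatible with $M$, but it cannot manufacture freeness, precisely because the arcs into critical cofaces are never reversed.

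A second, related defect: even when collapses happen to be available, what is ``left behind'' after removing all matched pairs is generally not a subcomplex of $\Delta$ (the boundary of a critical cell can contain matched cells), so $\Delta_c$ cannot be taken to be the set of critical cells sitting inside $\Delta$. The standard proof instead filters $\Delta$ by the level subcomplexes of a discrete Morse function inducing $M$: on a slab containing no critical value the inclusion of level subcomplexes is a genuine collapse (there the matched pairs really are free, since everything above them in that slab is removed simultaneously), while crossing a critical cell changes the homotopy type by attaching a single cell, via a cofibration/pushout argument in which the attaching map is the original one composed with the accumulated deformation retractions. That is exactly why the conclusion is only a homotopy equivalence to \emph{some} CW complex with $c_i$ cells in each dimension, rather than a collapse onto a subcomplex. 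Your final paragraph gestures at these modified attaching maps, but that is incompatible with your opening claim that the collapses leave the critical cells behind as the answer; to repair the argument you would need to abandon the global collapsing scheme and adopt the level-subcomplex induction.
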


 A very simple way of constructing a pairing on a face poset is to choose a vertex and then pair each face that contains that vertex with the subface obtained by deleting that vertex. 

\begin{definition}\label{element_matching}\cite{higher_ind, jonsson-book}
Let $x$ be an arbitrary vertex of a simplicial complex $K$. The 
\textbf{ element pairing on $K$ using vertex $x$} is defined as: 
$$M(x)= \{ (\sigma,  \sigma \cup \{x\}) \mid x \notin \sigma,\  \sigma \cup \{x\} \in K \}.$$
\end{definition}


Throughout the paper we will use the property that a union of a sequence of element pairings is an acyclic pairing, as the following theorem claims.

\begin{theorem} (\cite[Proposition 2.10]{higher_ind}, \cite[Lemma 4.1]{jonsson-book}) \label{sequence_element_matchings}
Let $K$ be a simplicial complex and $\{x_1,x_2, \ldots, x_k\}$ be a subset of the vertex set of $K$. Let $K_0= K$, and for all $i \in \{1,2, \ldots, k\}$ define inductively:
\begin{align*}
    & M(x_i) = \{ (\sigma,  \sigma \cup \{x_i\}) \mid x_i \notin \sigma,\text{ and }  \sigma, \sigma \cup \{x_i\} \in K_{i-1} \}, \\
    & N(x_i) = \{ \sigma \in K_{i-1} \mid \sigma \in \eta \text{ for some } \eta \in M(x_i)\}, \text{ and}\\
    & K_i = K_{i-1} \setminus N(x_i).
    \end{align*}
Then  $\displaystyle \bigsqcup_{i=1}^{k} M(x_i)$ is an acyclic pairing on $K$.
\end{theorem}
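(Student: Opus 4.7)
The plan is to establish two things: that $\bigsqcup_{i=1}^{k} M(x_i)$ is a partial pairing on $K$, and that it is acyclic.

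For the partial-pairing property, once a face $\sigma$ appears in a pair of $M(x_i)$, it lies in $N(x_i)$ and is removed from $K_i$; hence it cannot reappear in $M(x_j)$ for $j > i$. Within a single $M(x_i)$, any face has at most one candidate partner (add $x_i$ if absent, or remove $x_i$ if present), so each $M(x_i)$ is itself a partial pairing.

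For acyclicity, I would first verify that each individual $M(x_i)$ is acyclic as a matching on $K_{i-1}$: in any would-be cycle $\sigma_1 \prec \sigma_1 \cup \{x_i\} \succ \sigma_2 \prec \cdots$, the face $\sigma_{j+1}$ is obtained from $\sigma_j \cup \{x_i\}$ by removing a vertex different from $x_i$ (since $\sigma_{j+1} \neq \sigma_j$), forcing $x_i \in \sigma_{j+1}$ and contradicting the requirement $x_i \notin \sigma_{j+1}$. Then for the union I would induct on $k$, or equivalently analyze a putative cycle $a_1 \prec u(a_1) \succ a_2 \prec \cdots \succ a_1$ directly: assigning to each pair its step index $i_j$ (so $u(a_j) = a_j \cup \{x_{i_j}\}$), the same removal argument at every transition forces $x_{i_j} \in a_{j+1}$, while the fact that $a_{j+1}$ is a lower element for step $i_{j+1}$ forces $x_{i_{j+1}} \notin a_{j+1}$. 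Hence $i_{j+1} \neq i_j$, and the goal becomes to show the sequence $(i_j)$ must actually strictly decrease around the cycle, which is impossible in a finite cyclic sequence.

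The main obstacle is exactly this final strict-decrease step. Ruling out the case $i_{j+1} > i_j$ requires tracking why $a_{j+1}$ survives step $i_j$ despite containing $x_{i_j}$: this can only happen if $a_{j+1} \setminus \{x_{i_j}\} \notin K_{i_j-1}$, i.e.\ if that subface was itself paired at some earlier step. The filtration $K_0 \supseteq K_1 \supseteq \cdots$ is not downward-closed at each stage, which complicates propagating membership between sub- and superfaces. The standard resolution (used in the cited references) is to invoke a Patchwork-style theorem: define the poset map $\sigma \mapsto \phi(\sigma)$ sending each face to the step at which it is paired (or to $k+1$ if critical), verify that each fiber carries the acyclic matching $M(x_i)$, and conclude acyclicity of the union, thereby avoiding the delicate direct case analysis on the filtration.
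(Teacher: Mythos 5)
The paper itself gives no proof of Theorem~\ref{sequence_element_matchings}; it is imported verbatim from the cited sources, so your argument has to stand on its own. The parts you carry out are fine: the union is a partial pairing because a face entering some $M(x_i)$ lands in $N(x_i)$ and is deleted from $K_i$, and each individual $M(x_i)$ is acyclic by the parity argument you give. The genuine gap lies in your proposed resolution of the very step you flag as the obstacle: the map $\phi$ sending a face to the stage at which it is paired is \emph{not} order-preserving, so the Patchwork Theorem does not apply to it. Concretely, let $K$ have facets $\{x_1,c\}$ and $\{x_2,c,d\}$ and pair on $x_1$ first, then $x_2$. Stage~1 removes $N(x_1)=\{\emptyset,\{x_1\},\{c\},\{x_1,c\}\}$; at stage~2 the pairs are $(\{d\},\{x_2,d\})$ and $(\{c,d\},\{x_2,c,d\})$, while $\{x_2,c\}$ cannot be paired because its would-be partner $\{c\}$ is already gone. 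Thus $\phi(\{x_2,c,d\})=2$ but $\phi(\{x_2,c\})>2$ although $\{x_2,c\}\subset\{x_2,c,d\}$ (and $\phi(\{c\})=1<2=\phi(\{c,d\})$ rules out order-reversal as well). So ``define the poset map $\phi$'' is exactly where the proof breaks: the difficulty you identified is not avoided by the patchwork formulation, it reappears as the false claim that $\phi$ is a poset map.

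The statement is nevertheless true, and a direct argument closes the gap without any monotonicity of the index sequence. Suppose $a_1\prec u(a_1)\succ a_2\prec\cdots\succ a_1$ is a cycle with $(a_j,u(a_j))\in M(x_{i_j})$, so $u(a_j)=a_j\cup\{x_{i_j}\}$ and $a_{j+1}=u(a_j)\setminus\{v_j\}$ with $v_j\ne x_{i_j}$, whence $x_{i_j}\in a_{j+1}$ and $i_{j+1}\ne i_j$, as you observed. Let $i^*=\min_j i_j$, attained at $j^*$. The vertex $x_{i^*}$ lies in $a_{j^*+1}$ but not in $a_{j^*}$, so going around the cycle there is a first subsequent index $\ell$ with $v_\ell=x_{i^*}$; for it, $x_{i^*}\in a_\ell$, hence $i_\ell\ne i^*$, hence $i_\ell>i^*$ and $u(a_\ell)\in K_{i_\ell-1}\subseteq K_{i^*}$. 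Since $u(a_\ell)$ contains $x_{i^*}$, lies in $K_{i^*-1}$, and was not paired at stage $i^*$, its codimension-one face $u(a_\ell)\setminus\{x_{i^*}\}=a_{\ell+1}$ must already be absent from $K_{i^*-1}$. But $a_{\ell+1}$ is the lower element of a pair in $M(x_{i_{\ell+1}})$ with $i_{\ell+1}\ge i^*$, so $a_{\ell+1}\in K_{i_{\ell+1}-1}\subseteq K_{i^*-1}$, a contradiction. This ``track the minimal-index vertex until it is removed'' step is what your outline is missing, and no appeal to a patchwork-type theorem substitutes for it.
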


\subsection{Perfect matchings in $H_{1 \times m \times n}$}
We label the hexagons and the edges of 
the $1\times m\times n$ honeycomb graph as follows.
Let $T_{i,j}$ denote the hexagon located in column $i$ ($1\le i\le m$) and row $j$ ($1\le j\le n$) starting from the bottom left hexagon; see Figures~\ref{honeycomb} and~\ref{edge-labels}. 
That is, for fixed $i$, the hexagons $T_{i,j}$, $1\le j\le n$, form a vertical
sequence with $T_{i,1}$ on the bottom and $T_{i,n}$ on the top.
For fixed $j$, the hexagons $T_{i,j}$, $1\le i\le m$, form a horizontal
sequence with $T_{1,j}$ on the left and $T_{m,j}$ on the right
(see Figure~\ref{honeycomb}).
The edges are labeled $a_{i,j}$, $b_{i,j}$, $d_{i,j}$, as in 
Figure~\ref{edge-labels}.
Here $a_{i,j}$ exists for $1\le i\le m$ and $0\le j\le n$;
$b_{i,j}$ exists for $0\le i\le m$ and $1\le j\le n$; and
$d_{i,j}$ exists for $0\le i\le m$ and $0\le j\le n$, except $d_{0,n}$ 
and $d_{m,0}$.

\begin{figure}[h]  
\begin{center}
\begin{tikzpicture}[scale=0.50]
\draw (0,4.5)--(1,6)--(0,7.5)--(1,9)--(3,9)--(4,7.5)--(6,7.5)--(7,6)--(9,6)--(10,4.5)--(9,3)--(10,1.5)--(9,0)--(7,0)--(6,1.5)--(4,1.5)--(3,3)--(1,3)--(0,4.5);
\draw (3,3)--(4,4.5)--(3,6)--(4,7.5);
\draw (6,1.5)--(7,3)--(6,4.5)--(7,6);
\draw (1,6)--(3,6);
\draw (4,4.5)--(6,4.5);
\draw (7,3)--(9,3);
\draw (2,7) node[above] {$T_{1,2}$};
\draw (5,5.5) node[above] {$T_{2,2}$};
\draw (8,4) node[above] {$T_{3,2}$};
\draw (2,4) node[above] {$T_{1,1}$};
\draw (5,2.5) node[above] {$T_{2,1}$};
\draw (8,1) node[above] {$T_{3,1}$};
\end{tikzpicture}
\end{center}
\caption{Honeycomb Labels \label{honeycomb}}
\end{figure}
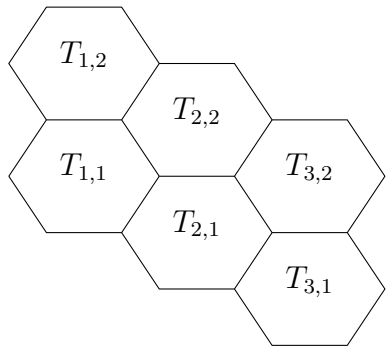

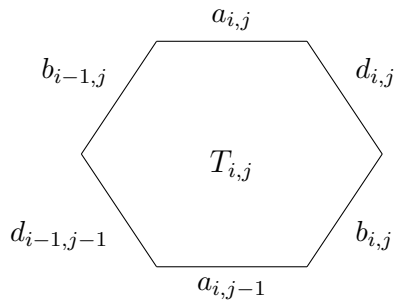
\begin{figure}[h]
\begin{center}
\begin{tikzpicture}
\draw (0,1.5)--(1,3) node[draw=none,fill=none,font=\small,midway,above left] 
       {$b_{i-1,j}$};
\draw (1,3)--(3,3) node[draw=none,fill=none,font=\small,midway,above] 
       {$a_{i,j}$};
\draw (3,3)--(4,1.5) node[draw=none,fill=none,font=\small,midway,above right] 
       {$d_{i,j}$};
\draw (3,0)--(4,1.5) node[draw=none,fill=none,font=\small,midway,below right] 
       {$b_{i,j}$};
\draw (3,0)--(1,0) node[draw=none,fill=none,font=\small,midway,below] 
       {$a_{i,j-1}$};
\draw (1,0)--(0,1.5) node[draw=none,fill=none,font=\small,midway,below left] 
       {$d_{i-1,j-1}$};
\draw (2,1) node[above] {$T_{i,j}$};
\end{tikzpicture}
\end{center}
\caption{Edge Labels \label{edge-labels}}
\end{figure}

\begin{remark} 
Recall
each perfect matching $P$  on $H_{1 \times m \times n}$  is identified with a 
plane partition, represented by a $1\times m$ matrix.  
Thus, we will denote $P$ as
$P = (h_1, h_2, \dots, h_m)$ where $n \geq h_1 \geq \cdots \geq h_m \geq 0$ and $h_1, \ldots, h_m \in \mathbb{Z}_{\geq 0}$. Notice that we can think of $h_1, \dots, h_m$ as heights of the respective columns of cubes. 
See Figure~\ref{fig:honeycomb_partition} for an example.

The matching $(h_1,h_2,\ldots, h_m)$, contains the following edges:  
\begin{itemize}
    \item $a_{i,j}$, if the edge is visible in the top of a cube or if column $i$ has no cubes and $a_{i,j}$ is the bottom horizontal edge in the column;
    \item $b_{i,j}$, if the edge is visible on the front right side of a cube or $b_{0,j}$ if the hexagon in column 1 is above the top cube in column 1;
    \item $d_{i,j}$, if the edge is in the front left side of a cube for any column.
\end{itemize}
\end{remark}
In terms of the plane partition, we get the following description of edges
in the perfect matching.
\begin{proposition}\label{remark:edge_direction}
Let $P$ be the perfect matching corresponding to the plane partition
$(h_1,h_2,\ldots, h_m)$.  Then
\begin{itemize}
\item $a_{i,j}\in P$ if and only if $j=h_i$;
\item $b_{i,j}\in P$ if and only if  
     ($h_i\ge j>h_{i+1}$) or ($i=0$ and $j>h_1$) or ($i=m$ and $j \leq h_m)$;
\item $d_{i,j}\in P$ if and only if 
      ($j> h_i$) or ($j<h_{i+1}$).
\end{itemize}
\end{proposition}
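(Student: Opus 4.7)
The plan is to translate each rule in the preceding Remark into the arithmetic condition of the Proposition, one edge type at a time, and then confirm that the resulting set of edges is a perfect matching (which by uniqueness identifies it with $P$).

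For (i), $a_{i,j}$ is horizontal and shared by $T_{i,j}$ and $T_{i,j+1}$; the Remark puts it in $P$ either as the top of the topmost cube of column $i$ (so $j=h_i$ with $h_i\ge 1$) or as the bottom edge of an empty column (so $h_i=0$ and $j=0$), both of which collapse to $j=h_i$. For (ii), $b_{i,j}$ is shared by $T_{i,j}$ and $T_{i+1,j}$ and lies on the front-right face of a cube iff a cube occupies $(i,j)$ while none occupies $(i+1,j)$, giving $h_{i+1}<j\le h_i$; the extra clauses for $i=0$ and $i=m$ come out of this same condition once we treat column $0$ as a full wall ($h_0=n$) and column $m+1$ as empty ($h_{m+1}=0$). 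For (iii), $d_{i,j}$ is shared by $T_{i,j}$ and $T_{i+1,j+1}$; the Remark places it in $P$ iff it is a front-left face, and tracking the projection shows this corresponds to either the cube at $(i+1,j+1)$ existing ($j+1\le h_{i+1}$, equivalently $j<h_{i+1}$) or the hexagon $T_{i,j}$ lying above column $i$'s pile ($j>h_i$).

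To confirm that the three rules jointly describe a perfect matching (and hence, by the bijection, exactly $P$), I would verify that every vertex of $H_{1\times m\times n}$ is covered by exactly one selected edge. A generic upward-facing interior vertex is the top-right vertex of some $T_{i,j}$ and meets $a_{i,j}$, $d_{i,j}$, $b_{i,j+1}$; a four-way case split on whether $j<h_{i+1}$, $h_{i+1}\le j<h_i$, $j=h_i$, or $j>h_i$ shows that exactly one of these satisfies the Proposition's conditions. The downward-facing vertex (bottom-left of $T_{i,j}$, incident to $a_{i,j-1}$, $d_{i-1,j-1}$, $b_{i-1,j-1}$) is handled by the analogous split on $j-1$ relative to $h_{i-1}$ and $h_i$, and boundary vertices reduce to the same checks after adopting the conventions $h_0=n$ and $h_{m+1}=0$.

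The main obstacle is indexing: in particular, that $d_{i,j}$ corresponds to the front-left face of the cube in position $(i+1,j+1)$ rather than $(i,j)$, and that the three boundary clauses in the statement of (ii), together with the boundary behavior implicit in (iii), must be absorbed uniformly via the virtual column heights $h_0=n$ and $h_{m+1}=0$. Once these conventions are in place, each of the three translations and the vertex-by-vertex check becomes a short, mechanical case split.
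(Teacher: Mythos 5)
Your proof is correct; the paper in fact states this proposition without proof, treating it as an immediate translation of the geometric description in the preceding Remark, and that translation (tops of cubes give $j=h_i$, front-right faces give $h_{i+1}<j\le h_i$ with the virtual heights $h_0=n$ and $h_{m+1}=0$ absorbing the boundary clauses, front-left faces and the exposed back wall give $j<h_{i+1}$ or $j>h_i$) is exactly what you carry out. Your additional vertex-by-vertex check that the three arithmetic conditions select exactly one of $a_{i,j}$, $d_{i,j}$, $b_{i,j+1}$ at each upward vertex (and the analogous statement at downward vertices) is a sound confirmation that goes beyond what the paper records.
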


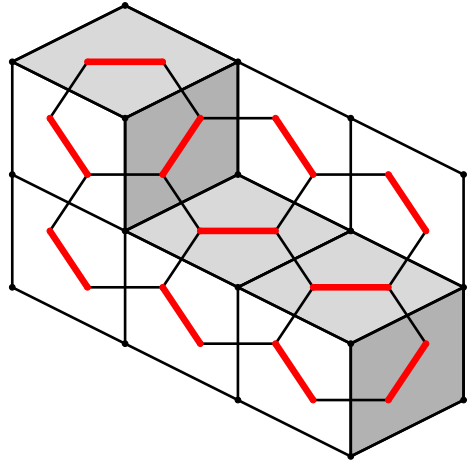
\begin{figure} 
\begin{center}
\begin{tikzpicture}[scale = 0.5]

\draw[line width = 1 pt, fill = gray!30] (1,9) -- (4,7.5) -- (7, 9) -- (4, 10.5) --(1,9);

\draw[line width = 1 pt, fill = gray!60] (4, 7.5) -- (4,4.5) -- (7,6) -- (7, 9) --(4,7.5);

\draw[line width = 1 pt, fill = gray!30] (4,4.5) -- (7,6) -- (10, 4.5) -- (7, 3) --(4, 4.5);

\draw[line width = 1 pt, fill = gray!30] (10,4.5) -- (7,3) -- (10, 1.5) -- (13, 3) --(10, 4.5);

\draw[line width = 1 pt, fill = gray!60] (13,3) -- (10,1.5) -- (10,-1.5) -- (13,0) --(13,3);

\draw[line width = 1 pt] (2,7.5) -- (3,9);  
\draw[line width = 1 pt] (5,9) -- (6,7.5); 
\draw[line width = 1 pt] (6,7.5) -- (8,7.5); 
\draw[line width = 1 pt] (9,6) -- (11,6); 
\draw[line width = 1 pt] (3,6) -- (5,6); 
\draw[line width = 1 pt] (5,6) -- (6,4.5); 
\draw[line width = 1 pt] (8,4.5) -- (9,6); 
\draw[line width = 1 pt] (8,4.5) -- (9,3); 
\draw[line width = 1 pt] (11,3) -- (12,4.5); 
\draw[line width = 1 pt] (11,3) -- (12,1.5); 
\draw[line width = 1 pt] (9,3) -- (8,1.5); 
\draw[line width = 1 pt] (8,1.5) -- (6,1.5);
\draw[line width = 1 pt] (5,3) -- (6,4.5);
\draw[line width = 1 pt] (3,3) -- (5,3);
\draw[line width = 1 pt] (2,4.5) -- (3,6);
\draw[line width = 1 pt] (9,0) -- (11,0);

\draw[red, line width = 2.5 pt] (3,9) -- (5,9); 
\draw[red, line width = 2.5 pt] (3,6) -- (2,7.5); 
\draw[red, line width = 2.5 pt] (3,3) -- (2, 4.5); 
\draw[red, line width = 2.5 pt] (5,6) -- (6, 7.5); 
\draw[red, line width = 2.5 pt] (9,6) -- (8, 7.5); 
\draw[red, line width = 2.5 pt] (6,4.5) -- (8, 4.5); 
\draw[red, line width = 2.5 pt] (6, 1.5) -- ( 5,3); 
\draw[red, line width = 2.5 pt] (11,6) -- (12, 4.5); 
\draw[red, line width = 2.5 pt] (9,3) -- (11,3); 
\draw[red, line width = 2.5 pt] (9,0) -- (8,1.5); 
\draw[red, line width = 2.5 pt] (11,0) -- (12,1.5);


\filldraw[red] (2,7.5) circle (0.075cm); 
\filldraw[red] (2,4.5) circle (0.075cm); 
\filldraw[red] (3,3) circle (0.075cm); 
\filldraw[red] (3,6) circle (0.075cm); 
\filldraw[red] (3,9) circle (0.075cm);
\filldraw[red] (5,9) circle (0.075cm); 
\filldraw[red] (5,6) circle (0.075cm); 
\filldraw[red] (5,3) circle (0.075cm); 
\filldraw[red] (6,1.5) circle (0.075cm); 
\filldraw[red] (6,4.5) circle (0.075cm); 
\filldraw[red] (6,7.5) circle (0.075cm); 
\filldraw[red] (8,1.5) circle (0.075cm); 
\filldraw[red] (8,4.5) circle (0.075cm); 
\filldraw[red] (8,7.5) circle (0.075cm); 
\filldraw[red] (9,6) circle (0.075cm); 
\filldraw[red] (9,3) circle (0.075cm); 
\filldraw[red] (9,0) circle (0.075cm); 
\filldraw[red] (11,0) circle (0.075cm); 
\filldraw[red] (11,3) circle (0.075cm); 
\filldraw[red] (11,6) circle (0.075cm); 
\filldraw[red] (12,4.5) circle (0.075cm); 
\filldraw[red] (12,1.5) circle (0.075cm); 


\draw[line width = 1 pt] (1,3) -- (1,9);
\draw[line width = 1 pt] (4,1.5) -- (4,7.5);  
\draw[line width = 1 pt] (7,0) -- (7,3);
\draw[line width = 1 pt] (7,6) -- (7,9);
\draw[line width = 1 pt] (10,-1.5) -- (10,1.5);
\draw[line width = 1 pt] (10,4.5) -- (10,7.5);
\draw[line width = 1 pt] (13,0) -- (13,6);  
\draw[line width = 1 pt] (1,9) -- (4,10.5);
\draw[line width = 1 pt] (4,10.5) -- (13,6);
\draw[line width = 1 pt] (13,3) -- (7,6);
\draw[line width = 1 pt] (4,7.5) -- (1,9);
\draw[line width = 1 pt] (1,6) -- (10,1.5);  
\draw[line width = 1 pt] (4,7.5) -- (7,9);
\draw[line width = 1 pt] (4,4.5) -- (7,6); 
\draw[line width = 1 pt] (1,3) -- (10,-1.5);
\draw[line width = 1 pt] (10,-1.5) -- (13,0); 
\draw[line width = 1 pt] (10,1.5) -- (13,3); 
\draw[line width = 1 pt] (7,3) -- (10,4.5);  

\filldraw (1,3) circle (0.075cm);
\filldraw (1,6) circle (0.075cm);
\filldraw (1,9) circle (0.075cm);
\filldraw (4,10.5) circle (0.075cm);
\filldraw (4,7.5) circle (0.075cm);
\filldraw (4,4.5) circle (0.075cm);
\filldraw (4,1.5) circle (0.075cm);
\filldraw (7,0) circle (0.075cm);
\filldraw (7,3) circle (0.075cm);
\filldraw (7,6) circle (0.075cm);
\filldraw (7,9) circle (0.075cm);
\filldraw (10,-1.5) circle (0.075cm);
\filldraw (10,1.5) circle (0.075cm);
\filldraw (10,4.5) circle (0.075cm);
\filldraw (10,7.5) circle (0.075cm);
\filldraw (13,0) circle (0.075cm);
\filldraw (13,3) circle (0.075cm);
\filldraw (13,6) circle (0.075cm);

\end{tikzpicture}
\end{center}
\caption{\label{figure4}Pictured above is $H_{1 \times 3 \times 2}$ along with a perfect matching highlighted in thicker red edges and overlaid with the corresponding plane partition $(2,1,1)$.}
\label{fig:honeycomb_partition}
\end{figure}

\begin{definition}\label{significant_edges}
For a $1 \times m \times n$ honeycomb graph, the edges $d_{i,j}$ at the intersection of $T_{i,j}$ and $T_{i+1,j+1}$ for $1 \leq i \leq m-1$ and $1 \leq j \leq n-1$ are called \textbf{significant edges}. See Figure~\ref{fig_significant_edges}.
\end{definition}

\begin{figure}[h] 
\begin{center}
\begin{tikzpicture}[scale = 0.4] 

\draw[line width = 1 pt] (2,7.5) -- (3,9);  
\draw[red, line width = 2.5 pt] (5,9) -- (6,7.5); 
\draw[line width = 1 pt] (6,7.5) -- (8,7.5); 
\draw[line width = 1 pt] (9,6) -- (11,6); 
\draw[line width = 1 pt] (3,6) -- (5,6); 
\draw[red, line width = 2.5 pt] (5,6) -- (6,4.5); 
\draw[line width = 1 pt] (8,4.5) -- (9,6); 
\draw[red, line width = 2.5 pt] (8,4.5) -- (9,3); 
\draw[line width = 1 pt] (11,3) -- (12,4.5); 
\draw[red, line width = 2.5 pt] (11,3) -- (12,1.5); 
\draw[line width = 1 pt] (9,3) -- (8,1.5); 
\draw[line width = 1 pt] (8,1.5) -- (6,1.5);
\draw[line width = 1 pt] (5,3) -- (6,4.5);
\draw[line width = 1 pt] (3,3) -- (5,3);
\draw[line width = 1 pt] (2,4.5) -- (3,6);
\draw[line width = 1 pt] (9,0) -- (11,0);

\draw[line width = 1 pt] (3,9) -- (5,9); 
\draw[line width = 1 pt] (3,6) -- (2,7.5); 
\draw[red, dashed, line width = 2.5 pt] (3,3) -- (2, 4.5); 
\draw[line width = 1 pt] (5,6) -- (6, 7.5); 
\draw[red, line width = 2.5 pt] (9,6) -- (8, 7.5); 
\draw[line width = 1 pt] (6,4.5) -- (8, 4.5); 
\draw[line width = 1 pt] (6, 1.5) -- ( 5,3); 
\draw[red, line width = 2.5 pt] (11,6) -- (12, 4.5); 
\draw[line width = 1 pt] (9,3) -- (11,3); 
\draw[line width = 1 pt] (9,0) -- (8,1.5); 
\draw[line width = 1 pt] (11,0) -- (12,1.5);


\draw[line width = 1 pt] (2,14.5) -- (3,16);  
\draw[line width = 1 pt] (5,16) -- (6,14.5); 
\draw[line width = 1 pt] (6,14.5) -- (8,14.5); 
\draw[line width = 1 pt] (9,13) -- (11,13); 
\draw[line width = 1 pt] (3,13) -- (5,13); 
\draw[red, line width = 2.5 pt] (5,13) -- (6,11.5); 
\draw[line width = 1 pt] (8,11.5) -- (9,13); 
\draw[red, line width = 2.5 pt] (8,11.5) -- (9,10); 
\draw[line width = 1 pt] (11,10) -- (12,11.5); 
\draw[red, line width = 2.5 pt] (11,10) -- (12,8.5); 
\draw[line width = 1 pt] (9,10) -- (8,8.5); 
\draw[line width = 1 pt] (8,8.5) -- (6,8.5);
\draw[line width = 1 pt] (5,10) -- (6,11.5);
\draw[line width = 1 pt] (3,10) -- (5,10);
\draw[line width = 1 pt] (2,11.5) -- (3,13);
\draw[line width = 1 pt] (9,7) -- (11,7);

\draw[line width = 1 pt] (3,16) -- (5,16); 
\draw[line width = 1 pt] (3,13) -- (2,14.5); 
\draw[line width = 1 pt] (3,10) -- (2, 11.5); 
\draw[line width = 1 pt] (5,13) -- (6, 14.5); 
\draw[line width = 1 pt] (9,13) -- (8, 14.5); 
\draw[line width = 1 pt] (6,11.5) -- (8, 11.5); 
\draw[red, line width = 2.5 pt] (6, 8.5) -- ( 5,10); 
\draw[line width = 1 pt] (11,13) -- (12, 11.5); 
\draw[line width = 1 pt] (9,10) -- (11,10); 
\draw[red, line width = 2.5 pt] (9,7) -- (8,8.5); 
\draw[line width = 1 pt] (11,7) -- (12,8.5);


\draw[line width = 1 pt] (12,2.5) -- (13,4);  
\draw[red, line width = 2.5 pt](15,4) -- (16,2.5); 
\draw[line width = 1 pt] (16,2.5) -- (18,2.5); 
\draw[line width = 1 pt] (19,1) -- (21,1); 
\draw[line width = 1 pt] (13,1) -- (15,1); 
\draw[red, line width = 2.5 pt] (15,1) -- (16,-0.5); 
\draw[line width = 1 pt] (18,-0.5) -- (19,1); 
\draw[red, line width = 2.5 pt] (18,-0.5) -- (19,-2); 
\draw[line width = 1 pt] (21,-2) -- (22,-0.5); 
\draw[line width = 1 pt] (21,-2) -- (22,-3.5); 
\draw[line width = 1 pt] (19,-2) -- (18,-3.5); 
\draw[line width = 1 pt] (18,-3.5) -- (16,-3.5);
\draw[line width = 1 pt] (15,-2) -- (16,-0.5);
\draw[line width = 1 pt] (13,-2) -- (15,-2);
\draw[line width = 1 pt] (12,-0.5) -- (13,1);
\draw[line width = 1 pt] (19,-5) -- (21,-5);

\draw[line width = 1 pt] (13,4) -- (15,4); 
\draw[red, line width = 2.5 pt] (13,1) -- (12,2.5); 
\draw[line width = 1 pt] (13,-2) -- (12, -0.5); 
\draw[line width = 1 pt] (15,1) -- (16, 2.5); 
\draw[red, line width = 2.5 pt] (19,1) -- (18, 2.5); 
\draw[line width = 1 pt] (16,-0.5) -- (18, -0.5); 
\draw[line width = 1 pt] (16, -3.5) -- ( 15,-2); 
\draw[line width = 1 pt] (21,1) -- (22, -0.5); 
\draw[line width = 1 pt] (19,-2) -- (21,-2); 
\draw[line width = 1 pt] (19,-5) -- (18,-3.5); 
\draw[line width = 1 pt] (21,-5) -- (22,-3.5);


\draw[line width = 1 pt] (12,9.5) -- (13,11);  
\draw[line width = 1 pt] (15,11) -- (16,9.5); 
\draw[line width = 1 pt] (16,9.5) -- (18,9.5); 
\draw[line width = 1 pt] (19,8) -- (21,8); 
\draw[line width = 1 pt] (13,8) -- (15,8); 
\draw[red, line width = 2.5 pt] (15,8) -- (16,6.5); 
\draw[line width = 1 pt] (18,6.5) -- (19,8); 
\draw[red, line width = 2.5 pt] (18,6.5) -- (19,5); 
\draw[line width = 1 pt] (21,5) -- (22,6.5); 
\draw[line width = 1 pt] (21,5) -- (22,3.5); 
\draw[line width = 1 pt] (19,5) -- (18,3.5); 
\draw[line width = 1 pt] (18,3.5) -- (16,3.5);
\draw[line width = 1 pt] (15,5) -- (16,6.5);
\draw[line width = 1 pt] (13,5) -- (15,5);
\draw[line width = 1 pt] (12,6.5) -- (13,8);
\draw[line width = 1 pt] (19,2) -- (21,2);

\draw[line width = 1 pt] (13,11) -- (15,11); 
\draw[red, line width = 2.5 pt] (13,8) -- (12,9.5); 
\draw[red, line width = 2.5 pt] (13,5) -- (12, 6.5); 
\draw[line width = 1 pt] (15,8) -- (16, 9.5); 
\draw[line width = 1 pt] (19,8) -- (18, 9.5); 
\draw[line width = 1 pt] (16,6.5) -- (18, 6.5); 
\draw[red, line width = 2.5 pt] (16, 3.5) -- ( 15,5); 
\draw[red, dashed, line width = 2.5 pt](21,8) -- (22, 6.5); 
\draw[line width = 1 pt] (19,5) -- (21,5); 
\draw[red, line width = 2.5 pt] (19,2) -- (18,3.5); 
\draw[line width = 1 pt] (21,2) -- (22,3.5);

\draw [decorate, line width = 2 pt,
    decoration = {brace}] (1,3) --  (1,17);
 
\draw [decorate, line width = 2 pt,
    decoration = {calligraphic brace}] (2,17) --  (24,17);
 
\node at (0,10) {$n$}; 
\node at (13,18) {$m$}; 
\node at (22,7.5) {$y$}; 
\node at (2.25,3.5) {$x$}; 
\node at (19,-1) {$z$};

\end{tikzpicture} 
\caption{The $1 \times m \times n$ honeycomb graph with significant edges 
highlighted in thicker red.}\label{fig_significant_edges}
 \end{center}
\end{figure}


Directly from Definition~\ref{perfect_matching_complex} we derive the following  properties of facets of $\match_p(H_{1 \times m \times n}).$

\begin{lemma}\label{lemma_xy}
Consider the honeycomb graph $H=H_{1 \times m \times n}$, $m, n \in \mathbb{N}$, $m,n \ge 2$, and its perfect matching complex $\match_p(H).$ 
Consider vertices of $\match_p(H)$ that  correspond to edges $x=d_{0,0}$ and 
$y=d_{m,n}$ (Figure~\ref{fig_significant_edges}). Let  $P=(h_1, \ldots, h_m)$ be an arbitrary perfect matching on $H$. Then vertex $x$ belongs to $P$ if and only if $P \neq (\underbrace{0, \ldots, 0}_{m}),$ while vertex  $y$ belongs to $P$ if and only if $P \neq (\underbrace{n, \ldots, n}_{m}).$ 
\end{lemma}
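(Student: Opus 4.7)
The plan is to deduce both equivalences directly from the edge-membership criterion in Proposition~\ref{remark:edge_direction}, with the monotonicity $h_1\ge h_2\ge\cdots\ge h_m$ of a plane partition doing the rest of the work. Since $x=d_{0,0}$ sits on the left boundary ($i=0$) and $y=d_{m,n}$ sits on the right boundary ($i=m$), the first step is to pin down which clause of the ``$d_{i,j}\in P$'' criterion survives at these boundary indices. The honest way to read the proposition at the boundary is that $h_0$ and $h_{m+1}$ are undefined, so for $d_{0,j}$ only the second disjunct $j<h_1$ applies, and for $d_{m,j}$ only the first disjunct $j>h_m$ applies (geometrically: no cube in a nonexistent column $0$ can produce a front-left face containing $d_{0,j}$, and symmetrically on the right).

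For the edge $x=d_{0,0}$ I would specialize $j=0$ to obtain $x\in P$ iff $0<h_1$, i.e.\ iff $h_1\ge 1$. By the monotonicity condition $h_1\ge h_i\ge 0$ for every $i$, the inequality $h_1\ge 1$ is equivalent to the existence of some $i$ with $h_i\ge 1$, which is precisely the statement $P\neq(\underbrace{0,\ldots,0}_{m})$. For the edge $y=d_{m,n}$ the argument is completely symmetric: specializing $j=n$ gives $y\in P$ iff $n>h_m$, i.e.\ $h_m\le n-1$; monotonicity $n\ge h_i\ge h_m$ then upgrades this to the existence of some $i$ with $h_i<n$, which is precisely $P\neq(\underbrace{n,\ldots,n}_{m})$.

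There is no real obstacle in this proof; the only thing that requires mild care is justifying the boundary reading of Proposition~\ref{remark:edge_direction} (handling the missing $h_0$ and $h_{m+1}$), which one can either read off the geometric description of the matching given in the preceding remark or verify by inspection of the two extremal matchings $(0,\ldots,0)$ and $(n,\ldots,n)$ to confirm that $x$ is absent in the first and $y$ is absent in the second.
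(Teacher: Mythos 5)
Your proposal is correct and follows essentially the same route as the paper: both apply Proposition~\ref{remark:edge_direction} to $d_{0,0}$ and $d_{m,n}$ to reduce membership to $h_1>0$ and $h_m<n$ respectively, and then use the monotonicity of the plane partition to translate these into $P\neq(0,\ldots,0)$ and $P\neq(n,\ldots,n)$. Your extra care about the undefined boundary values $h_0$ and $h_{m+1}$ is a reasonable elaboration of what the paper leaves implicit.
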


\begin{proof} 
By Proposition~\ref{remark:edge_direction},
the vertex $y=d_{m,n}$ belongs to $P$ if and only if $h_m <  n$, which occurs if only if $P \neq (\underbrace{n, \ldots, n}_{m}).$ 
The vertex $x=d_{0,0}$ belongs to $P$ if and only if $h_1 >0$, which occurs 
if and only if $P \neq (\underbrace{0, \ldots, 0}_{m}).$ 
\end{proof}

\begin{lemma}\label{intersection_all0_alln}
Consider the honeycomb graph $H=H_{1 \times m \times n}$, $m, n \in \mathbb{N}$, $m,n \ge 2$, and its perfect matching complex $\match_p(H).$ Then the  intersection of facet $(\underbrace{0, \ldots, 0}_{m})$ and facet $(\underbrace{n, \ldots, n}_{m})$ is a simplex whose set of vertices corresponds to the set of significant edges. 
\end{lemma}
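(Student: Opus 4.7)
The plan is to apply Proposition~\ref{remark:edge_direction} twice, once for the all-zeros partition $P_0 = (0, \ldots, 0)$ and once for the all-$n$'s partition $P_1 = (n, \ldots, n)$, list the edges in each facet explicitly, and then take the intersection edge-type by edge-type. Since the intersection of two facets in a simplicial complex is the simplex on their common vertex set, this will reduce the lemma to a finite combinatorial check.

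First I would read off the edges of $P_0$. By Proposition~\ref{remark:edge_direction}, the $a$-edges of $P_0$ are $\{a_{i,0} : 1 \le i \le m\}$ (forced by $j = h_i = 0$), and the only $b$-edges are the boundary ones $\{b_{0,j} : 1 \le j \le n\}$ (from the clause $i=0$, $j > h_1 = 0$), since the case $i=m$, $j \le h_m$ is vacuous. For the $d$-edges, the condition $j > h_i$ or $j < h_{i+1}$ becomes simply $j \ge 1$ for $i \ge 1$, giving $\{d_{i,j} : 1 \le i \le m,\ 1 \le j \le n\}$, and excluding every $d_{0,j}$. Symmetrically, for $P_1$ the $a$-edges are $\{a_{i,n} : 1 \le i \le m\}$, the $b$-edges are the boundary set $\{b_{m,j} : 1 \le j \le n\}$, and the $d$-edges are $\{d_{i,j} : 0 \le i \le m-1,\ 0 \le j \le n-1\}$ (from $j < h_{i+1} = n$).

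Intersecting facet by facet: the $a$-contributions do not overlap because $n \ne 0$; the $b$-contributions do not overlap because $m \ne 0$; and the $d$-contributions overlap precisely on
\[
\{d_{i,j} : 1 \le i \le m-1,\ 1 \le j \le n-1\},
\]
which is exactly the set of significant edges by Definition~\ref{significant_edges}. Since $P_0$ and $P_1$, viewed as simplices in $\match_p(H)$, intersect in the simplex spanned by their common vertex set, this establishes the lemma.

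The only real obstacle is careful bookkeeping of the boundary $d$-edges: the proposition's condition $(j > h_i)$ or $(j < h_{i+1})$ implicitly uses the conventions $h_0 = n$ and $h_{m+1} = 0$ (so that $d_{0,0}$ and $d_{m,n}$ behave as in Lemma~\ref{lemma_xy}), and one must also remember that $d_{0,n}$ and $d_{m,0}$ are not edges of the graph at all. With these conventions fixed the computation above is routine.
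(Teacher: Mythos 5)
Your proposal is correct and follows essentially the same route as the paper: both arguments reduce the lemma to an edge-type-by-edge-type comparison of the two facets using Proposition~\ref{remark:edge_direction}. The only cosmetic difference is that the paper obtains the significant edges' membership in both matchings via a vertex-covering argument (each interior vertex must be matched, and $d_{i,j}$ is the only remaining incident edge), whereas you read them off directly from the proposition; your explicit note about the boundary conventions for $d_{0,j}$ and $d_{m,j}$ is a worthwhile piece of care that the paper handles implicitly.
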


\begin{proof}
If $P$ equals either $(0,\ldots, 0)$ or $(n,\ldots, n)$, then, according to Proposition~\ref{remark:edge_direction}, for $1\le i\le m-1$ and $1\le j\le n-1$, $P$ contains neither $a_{i,j}$ nor $b_{i,j+1}$.  Each interior vertex of the honeycomb graph lies on exactly three edges, $a_{i,j}$, $b_{i,j+1}$ and $d_{i,j}$ ($1\le i\le m-1$, $1\le j\le n-1$).  Thus, to cover this interior vertex, $P$ must contain the significant edge $d_{i,j}$. 

We check that the two perfect matchings share no other edges.  Clearly, they share no edge $a_{i,j}$.  The matching $(0,\ldots, 0)$ contains $b_{0,j}$ ($1\le j\le n$) but no other $b_{i,j}$.  The matching $(n,\ldots, n)$ contains $b_{m,j}$ ($1\le j\le n$) but no other $b_{i,j}$.  So the two share no edge $b_{i,j}$. Finally, $(n,\ldots, n)$ contains no $d_{i,n}$ and no $d_{m,j}$, while  $(0,\ldots, 0)$ contains no $d_{0,j}$ and no $d_{i,0}$. So the intersection of the perfect matchings $(0,\ldots, 0)$ and $(n,\ldots, n)$ is exactly the simplex with vertices corresponding to the significant edges.
\end{proof}

We will use this Lemma in the next two sections as we create Morse pairings.

\subsection{The $1 \times 2 \times n$ honeycomb graph}
Before proving the main result for $1\times m\times n$ honeycomb graphs, 
we consider the homotopy type in the special case $H_{1 \times 2 \times n}$. 


\begin{figure}[h]
\begin{center}
\begin{tikzpicture}[scale = 0.5]

\draw[line width = 1 pt] (2,7.5) -- (3,9);  
\draw[red, line width = 2.5 pt] (5,9) -- (6,7.5); 
\draw[line width = 1 pt] (6,7.5) -- (8,7.5); 
\draw[line width = 1 pt] (3,6) -- (5,6); 
\draw[red, line width = 2.5 pt] (5,6) -- (6,4.5); 
\draw[line width = 1 pt] (8,4.5) -- (9,6); 
\draw[line width = 1 pt] (8,4.5) -- (9,3); 
\draw[line width = 1 pt] (9,3) -- (8,1.5); 
\draw[line width = 1 pt] (8,1.5) -- (6,1.5);
\draw[line width = 1 pt] (5,3) -- (6,4.5);
\draw[line width = 1 pt] (3,3) -- (5,3);
\draw[line width = 1 pt] (2,4.5) -- (3,6);

\node at (5,8) {$d_{1,2}$}; 
\node at (5,5) {$d_{1,1}$}; 

\draw[line width = 1 pt] (3,9) -- (5,9); 
\draw[line width = 1 pt] (3,6) -- (2,7.5); 
\draw[red, dashed, line width = 2.5 pt] (3,3) -- (2, 4.5); 
\draw[line width = 1 pt] (5,6) -- (6, 7.5); 
\draw[line width = 1 pt] (9,6) -- (8, 7.5); 
\draw[line width = 1 pt] (6,4.5) -- (8, 4.5); 
\draw[line width = 1 pt] (6, 1.5) -- ( 5,3); 


\draw[line width = 1 pt] (2,14.5) -- (3,16);  
\draw[line width = 1 pt] (5,16) -- (6,14.5); 
\draw[line width = 1 pt] (6,14.5) -- (8,14.5); 
\draw[line width = 1 pt] (3,13) -- (5,13); 
\draw[red, line width = 2.5 pt] (5,13) -- (6,11.5); 
\draw[line width = 1 pt] (8,11.5) -- (9,13); 
\draw[line width = 1 pt] (8,11.5) -- (9,10); 
\draw[line width = 1 pt] (9,10) -- (8,8.5); 
\draw[line width = 1 pt] (8,8.5) -- (6,8.5);
\draw[line width = 1 pt] (5,10) -- (6,11.5);
\draw[line width = 1 pt] (3,10) -- (5,10);
\draw[line width = 1 pt] (2,11.5) -- (3,13);

\node at (6.2,12.5) {$d_{1,n}$}; 

\draw[line width = 1 pt] (3,16) -- (5,16); 
\draw[line width = 1 pt] (3,13) -- (2,14.5); 
\draw[line width = 1 pt] (3,10) -- (2, 11.5); 
\draw[line width = 1 pt] (5,13) -- (6, 14.5); 
\draw[red, dashed, line width = 2.5 pt] (9,13) -- (8, 14.5); 
\draw[line width = 1 pt] (6,11.5) -- (8, 11.5); 
\draw[red, line width = 2.5 pt] (6, 8.5) -- ( 5,10); 

\node at (6.45,9.5) {$d_{1,n-1}$};

 
 
\node at (9,13.75) {$y$}; 
\node at (2.25,3.5) {$x$}; 

\end{tikzpicture}
 \caption{Case $1 \times 2 \times n$.}\label{fig_1x2xn}
\end{center}
\end{figure}
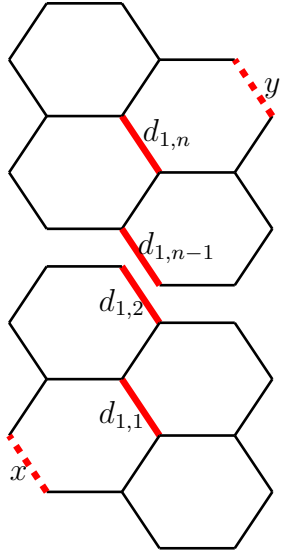


\begin{theorem}\label{1x2xn_sphere}
Let $H_{1 \times 2 \times n}$ be the honeycomb graph with $n \ge 2$.  Then
\[
\match_p(H_{1 \times 2 \times n}) \simeq S^{n-1}. 
\]
\end{theorem}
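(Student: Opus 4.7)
My plan is to apply the nerve theorem to the facet cover of $\match_p(H_{1\times 2\times n})$ and then analyze the resulting nerve via a star/antistar decomposition around the distinguished edge $e_*=\{F_{(0,0)},F_{(n,n)}\}$. Since each facet $F_{(h_1,h_2)}$ is a simplex and every nonempty intersection of facets is itself a simplex (hence contractible), the nerve theorem gives $\match_p(H_{1\times 2\times n})\simeq\mathcal N$, where the vertices of $\mathcal N$ are the facets $F_{(h_1,h_2)}$ and, for each edge $e$ of the graph, the set $S_e$ of facets containing $e$ spans a simplex $[S_e]$ of $\mathcal N$. In particular $\mathcal N=\bigcup_e[S_e]$.

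Next I would split $\mathcal N$ around $e_*$: let $C$ denote the closed star of $e_*$ in $\mathcal N$, and let $\mathcal N_0$ be the subcomplex of faces missing at least one of $F_{(0,0)}$, $F_{(n,n)}$. By Lemma~\ref{lemma_xy}, $\mathcal N_0=[S_x]\cup[S_y]$ is the union of two simplices on all facets except $F_{(0,0)}$ and $F_{(n,n)}$ respectively (sharing a large simplex), hence contractible. The closed star $C$ equals the join $e_**\mathcal K$ of the $1$-simplex $e_*$ with its link $\mathcal K:=\text{Link}_{\mathcal N}(e_*)$, so $C$ is also contractible. Since every face of $\mathcal N$ either misses one of $F_{(0,0)}$, $F_{(n,n)}$ or contains both, $\mathcal N=\mathcal N_0\cup C$, and $\mathcal N_0\cap C=(\partial e_*)*\mathcal K=S^0*\mathcal K=\Sigma\mathcal K$. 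Mayer--Vietoris then yields $\mathcal N\simeq\Sigma(\Sigma\mathcal K)=\Sigma^2\mathcal K$, reducing everything to identifying $\mathcal K$.

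By Lemma~\ref{intersection_all0_alln}, the edges of $H_{1\times 2\times n}$ lying in both $F_{(0,0)}$ and $F_{(n,n)}$ are precisely the significant edges $d_{1,j}$ for $1\le j\le n-1$, so $\mathcal K=\bigcup_{j=1}^{n-1}Z_j$ with $Z_j:=[S_{d_{1,j}}\setminus\{F_{(0,0)},F_{(n,n)}\}]$. Using the disjoint partition $S_{d_{1,j}}=L_j\sqcup U_j$ with $L_j=\{F:h_1\le j-1\}$ and $U_j=\{F:h_2\ge j+1\}$ coming from Proposition~\ref{remark:edge_direction}, I would verify that for any nonempty $J\subseteq\{1,\ldots,n-1\}$ the intersection $\bigcap_{j\in J}Z_j$ is a (contractible) simplex that is empty precisely when $J=\{1,\ldots,n-1\}$. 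A second application of the nerve theorem then identifies $\mathcal K$ with the boundary of the $(n-2)$-simplex on vertex set $\{1,\ldots,n-1\}$, i.e.\ $\mathcal K\simeq S^{n-3}$ (with the convention $S^{-1}=\varnothing$ covering the base case $n=2$). Combining, $\match_p(H_{1\times 2\times n})\simeq\Sigma^2 S^{n-3}=S^{n-1}$.

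The main obstacle is the combinatorial computation in the last step. For $J$ with elements $j_1<\cdots<j_k$, one computes $\bigcap_{j\in J}S_{d_{1,j}}=L_{j_1}\cup(U_{j_1}\cap L_{j_2})\cup\cdots\cup(U_{j_{k-1}}\cap L_{j_k})\cup U_{j_k}$, where each ``gap'' term $U_{j_l}\cap L_{j_{l+1}}$ equals $\{(h_1,h_2):j_l+1\le h_2\le h_1\le j_{l+1}-1\}$; after removing $F_{(0,0)}$ and $F_{(n,n)}$, the remaining set is nonempty unless $j_1=1$, $j_k=n-1$, and every gap is trivial (i.e.\ $j_{l+1}=j_l+1$), which is precisely the case $J=\{1,\ldots,n-1\}$. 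This careful case analysis on the gap pattern of $J$ is the one technical step that makes the whole argument work.
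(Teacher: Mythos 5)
Your argument is correct, but it reaches $S^{n-1}$ by a genuinely different route than the paper. The paper proves Theorem~\ref{1x2xn_sphere} with discrete Morse theory: the two element pairings $M(x)$ and $M(y)$ leave exactly one critical cell, $\{y\}\cup\{d_{1,1},\ldots,d_{1,n-1}\}$, of dimension $n-1$ (plus a $0$-cell because $\emptyset$ is paired), and Theorem~\ref{DMT} finishes the job. You instead extend the nerve-theorem technique that the paper uses only for $H_{1\times 1\times n}$: pass to the nerve of the facet cover, decompose it as the union of the contractible antistar $[S_x]\cup[S_y]$ and the contractible closed star of the edge $\{F_{(0,0)},F_{(n,n)}\}$, obtain $\mathcal N\simeq\Sigma^2\mathcal K$ for the link $\mathcal K$, and then identify $\mathcal K\simeq\partial\Delta^{n-2}$ by a second nerve computation. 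I checked the decomposition ($\mathcal N_0=[S_x]\cup[S_y]$ is exactly the set of nerve faces missing one of the two special facets, and $\mathcal N_0\cap C=(\partial e_*)*\mathcal K$), the partition $S_{d_{1,j}}=L_j\sqcup U_j$ against Proposition~\ref{remark:edge_direction}, and the gap formula for $\bigcap_{j\in J}S_{d_{1,j}}$; all are right, and the $n=2$ degenerate case ($\mathcal K=\emptyset$, $\Sigma^2\emptyset=S^1$) is handled. Both proofs ultimately rest on Lemma~\ref{lemma_xy}, Lemma~\ref{intersection_all0_alln}, and the same witness matchings $(i,i)$; indeed, for the final step you only need $\bigcap_{j}Z_j=\emptyset$ together with $(i,i)\in\bigcap_{j\neq i}Z_j$ (downward closure of the nerve does the rest), so your full gap analysis, while correct, is more than is required. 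What your approach buys is a more structural picture of where the sphere comes from (a double suspension of the simplicial complex spanned by the significant edges' facet sets); what the paper's approach buys is reusability, since the pairing $M(x)\cup M(y)$ is recycled verbatim as the first two steps of Theorem~\ref{1xmxn_contractible}. One small point to tighten: the step $\mathcal N\simeq\Sigma(\mathcal N_0\cap C)$ should be attributed to the gluing lemma (a union of two contractible subcomplexes along cofibrations is the suspension of their intersection, being a homotopy pushout), not to Mayer--Vietoris, which by itself only yields the homology isomorphism.
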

\begin{proof} Let $H=H_{1 \times 2 \times n}$.   We construct a Morse pairing on the face poset of $\match_p(H)$ by using two element pairings:
\begin{itemize}
\item[(1)] construct element pairing $M(x)$ using vertex $x$ (see Figure~\ref{fig_1x2xn}); then
\item[(2)]  on the set of unpaired faces, construct element pairing $M(y)$ using vertex $y$.
\end{itemize}

From Theorem~\ref{sequence_element_matchings} we know that the union of a sequence of element pairings is an acyclic pairing. Therefore, $M(x) \cup M(y)$ is an acyclic pairing on the face poset of $\match_p(H)$. We will prove that there is only one face of $\match_p(H)$  that is unpaired after $M(x) \cup M(y)$.

From Lemma~\ref{lemma_xy} we know that vertex $x$ belongs to a perfect matching $P=(h_1, h_2)$ if and only if $P \neq (0, 0),$ while vertex  $y$ belongs to $P$ if and only if $P \neq (n, n).$ Therefore, complex $\match_p(H)$ can be seen as the union of two cones with apices $x$ and $y$. From this representation we observe that a face $\tau \in \match_p(H)$ is unpaired after $M(x) \cup M(y)$ if and only if $\tau$ has the following structure:
$$\tau = \{y\} \cup \sigma,$$
where the face $\sigma \in \match_p(H)$ satisfies:
\begin{equation}\label{sigmas}
x, y \notin \sigma, \ \{y\} \cup \sigma \in  \match_p(H), \ \{x\} \cup \sigma \in  \match_p(H), \text{ and }  \{x,y\} \cup \sigma \notin  \match_p(H).
\end{equation}
Let $\sigma \in \match_p(H)$ be an arbitrary face that satisfies (\ref{sigmas}). If there exists a perfect matching $P \notin \{(0,0), (n,n)\}$ such that $\sigma \subseteq P,$ 
then $\sigma \cup \{x,y\}$ is also contained in $P$,  which contradicts the last condition in (\ref{sigmas}). The only perfect matchings that might contain $\sigma$ are $(0,0)$ and $(n,n)$. Further, since $x \notin (0,0)$ and $\{x\} \cup \sigma \in  \match_p(H)$, we conclude that $\sigma \subseteq (n,n).$ Similarly, conditions $y \notin (n,n)$ and $\{y\} \cup \sigma \in  \match_p(H)$ imply that $\sigma \subseteq (0,0).$ Therefore,
$$\sigma \subseteq (0,0) \cap (n,n) =  \{d_{1,1}, \ldots, d_{1,n-1}\},$$
where the second equality follows from  Lemma~\ref{intersection_all0_alln}.


Suppose that $\rho \subsetneq \{d_{1,1}, \ldots, d_{1,n-1}\}$ is a proper subset. Then, there exists some index $i \in \{1, \ldots, n-1\}$ such that $d_{1,i} \notin \rho$. The perfect matching $(i, i) \notin \{(0,0), (n,n)\}$ contains $\{d_{1,1}, d_{1,2}, \dots, d_{1,n-1}\} \smallsetminus \{d_{1,i}\}$. Therefore $(i,i)$ is a perfect matching that contains $\rho$.

Hence, there is only one critical cell, $\tau =\{y\} \cup \{d_{1,1}, \ldots, d_{1,n-1}\}.$ From Theorem~\ref{DMT}, complex $\match_p(H)$ is homotopy equivalent to a CW-complex with one  $(n-1)$-dimensional cell and one $0$-cell (because the empty set is paired with set $\{x\}$). Hence, $\match_p(H) \simeq S^{n-1}$.
\end{proof}

\subsection{The $1\times m\times n$ honeycomb graph, $m,n\ge 3$}

\begin{theorem}\label{1xmxn_contractible}
Let $H_{1 \times m \times n}$ be the honeycomb graph of dimension $1 \times m \times n$, with $m, n \in \mathbb{N}$, and $m, n \ge 3$.  Then the perfect matching complex 
$\match_p(H_{1 \times m \times n})$ is contractible.
\end{theorem}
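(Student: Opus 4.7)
The plan is to follow the element-pairing strategy of Theorem~\ref{1x2xn_sphere} and then add further pairings to eliminate the single critical cell that produced the sphere in the $m=2$ case. Set $H = H_{1 \times m \times n}$ and begin by applying $M(x)$ and then $M(y)$ on the face poset of $\match_p(H)$, where $x = d_{0,0}$ and $y = d_{m,n}$. By Lemma~\ref{lemma_xy} and the same bookkeeping as in the proof of Theorem~\ref{1x2xn_sphere}, the faces surviving $M(x) \cup M(y)$ are precisely those of the form $\{y\} \cup \sigma$ with $\sigma$ contained in both $(0,\ldots,0)$ and $(n,\ldots,n)$ and in no other perfect matching. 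By Lemma~\ref{intersection_all0_alln}, this forces $\sigma$ to lie inside the set $D$ of significant edges, and using the description of edges in Proposition~\ref{remark:edge_direction} the extra condition is equivalent to saying that $\sigma \cap S(P) \neq \emptyset$ for every $P \notin \{(0,\ldots,0),(n,\ldots,n)\}$, where $S(P) = D \setminus P = \{d_{i,j} : h_{i+1} \leq j \leq h_i\}$. In other words, $\sigma$ is a transversal of the family $\{S(P) : P \text{ non-trivial}\}$.

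For $m = 2$ the only transversal is $D$ itself, giving the single $(n-1)$-cell that produced the sphere. For $m, n \geq 3$, however, many proper subsets of $D$ are transversals, and the plan is to cancel all of them by continuing the cascade of element pairings. Concretely, I would apply further pairings $M(d_{i,j})$ for significant edges $d_{i,j}$ in some fixed order (for instance lexicographic in $(i,j)$), each continuing to be an acyclic pairing on the reduced complex by Theorem~\ref{sequence_element_matchings}. At step $M(d_{i,j})$, a surviving face $\{y\} \cup \sigma$ with $d_{i,j} \notin \sigma$ is paired upward with $\{y\} \cup \sigma \cup \{d_{i,j}\}$, while $\{y\} \cup \sigma$ with $d_{i,j} \in \sigma$ is paired downward provided $\sigma \setminus \{d_{i,j}\}$ is still a transversal that is alive in the current complex. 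If the cascade ends with no remaining critical cells, then by Theorem~\ref{DMT} the complex is homotopy equivalent to a single 0-cell (the one forced by the empty face being paired with $\{x\}$), hence contractible.

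The main obstacle is the combinatorial verification that this cascade actually terminates with no surviving critical cells. This reduces to showing that for $m, n \geq 3$ every transversal $\sigma$ admits, at the appropriate moment in the cascade, a significant edge whose addition or removal yields a cell still alive at that stage. The hypothesis $m, n \geq 3$ is essential because the grid $D$ of significant edges has shape at least $2 \times 2$, providing enough flexibility to shrink a transversal in two independent coordinate directions; when $m = 2$ (or $n = 2$) this flexibility is absent, which is exactly why the sphere survives in Theorem~\ref{1x2xn_sphere}. Key test matchings include $(k,k,\ldots,k)$ for $1 \leq k \leq n-1$ (which force each horizontal row of $D$ to be hit) and $(\underbrace{n,\ldots,n}_{r}, 0, \ldots, 0)$ for $1 \leq r \leq m-1$ (which force each column of $D$ to be hit); controlling the minimal transversals against these families, and propagating the control step by step through the cascade, is where the heart of the argument lies.
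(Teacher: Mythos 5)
Your setup matches the paper exactly: the same two element pairings $M(x)$, $M(y)$ with $x=d_{0,0}$, $y=d_{m,n}$, and the same (correct) characterization of the surviving faces as $\{y\}\cup\sigma$ with $\sigma$ contained in the set of significant edges and in no perfect matching other than $(0,\ldots,0)$ and $(n,\ldots,n)$ --- your reformulation of this last condition as a transversal condition on the sets $S(P)=D\setminus P$ is also correct. But the proof stops exactly where the real work begins. You propose a cascade of element pairings over \emph{all} significant edges in lexicographic order and then concede that ``the combinatorial verification that this cascade actually terminates with no surviving critical cells'' is ``where the heart of the argument lies.'' That verification is the theorem; without it you have a strategy, not a proof. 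Tracking which faces are ``still alive'' after many successive element pairings is genuinely delicate (each later pairing only matches faces whose partners also survived all earlier stages), and nothing in the proposal rules out critical cells accumulating at intermediate stages.

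The paper's resolution is sharper and worth internalizing: a \emph{single} additional element pairing $M(z)$ with $z=d_{m-1,1}$ (one corner of the grid of significant edges) already pairs every surviving face, so no cascade is needed. The nontrivial direction is showing that if $\tau\cup\{z\}$ survives $M(x)\cup M(y)$ then so does $\tau$, i.e.\ that $\sigma\cup\{x,y\}\in\match_p(H)$ forces $(\sigma\cup\{z\})\cup\{x,y\}\in\match_p(H)$. This is done by explicitly exhibiting a witness plane partition containing $(\sigma\cup\{z\})\cup\{x,y\}$: if $\sigma\cup\{x,y\}\subseteq(h_1,\ldots,h_m)$ with $h_{m-1}\ge 2$ the witness is $(h_1,\ldots,h_{m-1},2)$, and if $h_{m-1}=1$ the witness is $(h_1,\ldots,h_{m-2},0,0)$; the hypothesis $m\ge 3$ enters precisely in the second case to guarantee $h_{m-2}$ exists with $h_{m-2}\ge 1$. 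If you want to complete your argument, you should either carry out this single-edge pairing with its two-case witness construction, or supply the inductive ``aliveness'' bookkeeping for your full cascade --- the former is considerably shorter.
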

\begin{proof}
Let $H=H_{1\times m\times n}$.
We construct a discrete Morse pairing for $\match_p(H)$. 
Begin with the two element pairings as in Theorem~\ref{1x2xn_sphere}, and
extend it with one significant edge: 
\begin{enumerate} 
\item[(1)] construct an element pairing $M(x)$ using vertex $x=d_{0,0}$; then
\item[(2)] on the set of unpaired faces, construct element pairing $M(y)$ using
          vertex $y=d_{m,n}$; then
\item[(3)] on the set of unpaired faces, construct element pairing 
          $M(d_{m-1,1})$ using vertex $z=d_{m-1,1}$. 
\end{enumerate} 
(See Figure~\ref{fig_significant_edges} for edge labels.)

From Theorem~\ref{sequence_element_matchings} we know that the union of a 
sequence of element pairings is an acyclic pairing; therefore 
$M(x) \cup M(y) \cup M(z)$ is an acylic pairing on the face poset of 
$\match_p(H)$. 

As in the proof of Theorem~\ref{1x2xn_sphere} we see that a face $\tau \in \match_p(H)$ is unpaired after $M(x) \cup M(y)$ if and only if $\tau$ has the following structure:
$$\tau = \{y\} \cup \sigma,$$
where $\sigma \subseteq (0,\ldots,0) \cap (n,\ldots,n)$ and $\tau\cup\{x\}\not\in
\match_p(H)$.

Let $N$ be the set of faces in $\match_p(H)$ that are unpaired after
$M(x)\cup M(y)$. Reasoning as in the proof of Theorem~\ref{1x2xn_sphere} we get that
\begin{eqnarray*}N =
\{\tau\in \match_p(H):  \tau=\{y\}\cup\sigma &\mbox{for some}& 
\sigma \subseteq (0,\ldots,0) \cap (n,\ldots,n) \\ && \mbox{and }
\tau\cup\{x\}\not\in\match_p(H)\}.
\end{eqnarray*}
Note that $\sigma \subseteq (0,\ldots,0) \cap (n,\ldots,n)$ implies that $x, y \notin \sigma$.
We claim that the final element pairing $M(z)$ pairs all faces in $N$, and therefore the homotopy type of the perfect matching complex is contractible.
Thus, we wish to show $z\not\in \tau$, $\tau\in N$ if and only if
$\tau\cup\{z\}\in N$.

($\Rightarrow$)  Assume $\tau=\{y\}\cup\sigma \in N$, $z\not\in \tau$.  
Then $\sigma \subseteq (0,\ldots,0) \cap (n,\ldots,n)$ and 
$\tau\cup\{x\}\not\in\match_p(H)$.  
Consider $\tau\cup\{z\}=\{y\} \cup (\sigma\cup\{z\})$.
Since $z\in (0,\ldots,0) \cap (n,\ldots,n)$, as $z$ is a significant
edge, $\sigma\cup \{z\}\subseteq (0,\ldots,0) \cap (n,\ldots,n)$.  
This also shows that $\tau\cup \{z\}\subseteq (0,\ldots,0)$, and hence is
in $\match_p(H)$.
Also, ${(\tau\cup \{z\})\cup\{x\}\not\in \match_p(H)}$, since its 
subset $\tau\cup\{x\}$ is  not in $\match_p(H)$.
So $\tau\cup\{z\}\in N$.

($\Leftarrow$)  Assume $\tau\cup\{z\}\in N$ ($z\not\in \tau$).
Then $\tau\cup\{z\}=\{y\}\cup (\sigma\cup\{z\})$ where
$\sigma\cup\{z\}\subseteq (0,\ldots,0) \cap (n,\ldots,n)$ and
$(\tau\cup\{z\})\cup\{x\}\not\in \match_p(H)$.
Clearly, $\tau=\{y\} \cup \sigma\in \match_p(H)$, with 
$\sigma\subseteq (0,\ldots,0) \cap (n,\ldots,n)$. 
We need to show that $\tau\cup\{x\}=\sigma\cup \{x,y\}\not\in \match_p(H)$.
We prove this by contradiction.

Assume $\sigma\cup \{x,y\}\in\match_p(H)$, but 
$(\sigma\cup \{z\})\cup \{x,y\}\not\in\match_p(H)$.  
Say $\sigma\cup \{x,y\}\subseteq (h_1,h_2,\ldots, h_m)$.
Since $(\sigma\cup \{z\})\cup \{x,y\}\not\subseteq (h_1,h_2,\ldots, h_m)$,
by Proposition~\ref{remark:edge_direction} (applied to $z=d_{m-1,1}$), 
$h_{m-1}\ge 1$ and $h_m\le 1$.

Case 1.  $h_{m-1}\ge 2$ and $h_m\le 1$.  We claim that in this case
$(\sigma\cup\{z\})\cup\{x,y\}\subseteq (h_1,h_2,\ldots, h_{m-1},2)$.
First note that $x$ and $y$ are in $(h_1,h_2,\ldots, h_{m-1},2)$, since
each is in every perfect matching except $(0,\ldots,0)$ (in the case of $x$)
and $(n,\ldots, n)$ (in the case of $y$).  
Since $\sigma\subseteq (0,\ldots, 0)\cap(n,\ldots, n)$, all other 
elements of 
$(\sigma\cup\{z\})\cup\{x,y\}$ are significant edges, that is,
edges of the form $d_{i,j}$, $1\le i\le m-1$, $1\le j\le n-1$.  
By Proposition~\ref{remark:edge_direction}, $d_{i,j}\in (h_1,h_2,\ldots, h_{m-1},2)$
if and only if $j>h_i$ or $j<h_{i+1}$.  (We are assuming $i>0$.)
In particular, $1<2$, so $z=d_{m-1,1}\in (h_1,\ldots, h_{m-1},2)$.
In addition, we know 
$\sigma\cup \{x,y\}\subseteq (h_1,h_2,\ldots, h_m)$, 
so for $d_{i,j}\in \sigma$ with $i<m-1$, $d_{i,j}$ satisfies the criterion
for $(h_1,h_2,\ldots, h_{m-1}, 2)$.
On the other hand, for
$d_{m-1,j}\in \sigma$, $j\ge 2>h_m$, so
$d_{m-1,j}\in(h_1,h_2,\ldots, h_m)$ implies $j>h_{m-1}$.
Thus, $d_{m-1,j}$ also satisfies the criterion
for $(h_1,h_2,\ldots, h_{m-1}, 2)$.  So in Case 1, we conclude that
$(\tau\cup\{z\})\cup \{x\} = (\sigma\cup \{z\})\cup \{x,y\}
\in\match_p(H)$, a contradiction.

Case 2. $h_{m-1}=1$ and $h_m\le 1$.  We claim that in this case
$(\sigma\cup\{z\})\cup\{x,y\}\subseteq (h_1,h_2,\ldots, h_{m-2},0,0)$.
As in Case 1, $x$ and $y$ are in $(h_1,h_2,\ldots, h_{m-2},0,0)$. Note that since $m\ge 3$ and $h_{m-1}=1$, we have $h_{m-2}\ge 1$. 
Also, $z=d_{m-1,1}$ is in $(h_1,h_2,\ldots, h_{m-2},0,0)$, since 
$1>0$.
Again, consider the significant edges $d_{i,j}$ in $\sigma$. We know these $d_{i,j}$ are in $(h_1,h_2,\ldots, h_{m-1},1,h_m)$, so for $i < m-2$ it is obvious that $d_{i,j} \in (h_1,h_2,\ldots, h_{m-2},0,0)$. For $i= m-2$, the condition from Proposition~\ref{remark:edge_direction} reduces to $j > h_{m-2}$, so $d_{m-2,j} \in (h_1,h_2,\ldots, h_{m-2},0,0)$ again. Finally, for $i= m-1$,  Proposition~\ref{remark:edge_direction} implies  $j > 1 >0,$ and $d_{i,j} \in (h_1,h_2,\ldots, h_{m-2},0,0)$.

Again we have shown that
$(\tau\cup\{z\})\cup \{x\} = (\sigma\cup \{z\})\cup \{x,y\}
\in\match_p(H)$, a contradiction.

We conclude that 
$\tau\cup\{x\}=\sigma\cup \{x,y\}\not\in \match_p(H)$.
So $\tau\in N$.

Thus, all elements unpaired after $M(x)\cup M(y)$ are paired as
$(\tau, \tau\cup\{z\})$.  That is, the three element pairings,
$M(x)$, $M(y)$, and $M(z)$ pair all faces of 
$\match_p(H_{1\times m\times n})$,
so $\match_p(H_{1\times m\times n})$ ($m,n\ge 3$) is contractible.
\end{proof}

\section{The $2 \times 2 \times 2$ honeycomb graph} 

We conclude this article by calculating the homotopy type of the perfect matching complex for the $2 \times 2 \times 2$ honeycomb graph. Recall that each perfect matching on the honeycomb graph is in bijection with a plane partition. For the $2 \times 2 \times 2$ honeycomb graph that means we are considering plane partitions of shape $(2,2)$. We will denote these plane partitions by 
\begin{tabular}{l|l}
$a$ & $b$ \\ \hline
$c$ & $d$
\end{tabular}
where $0 \leq a,b,c,d \leq 2 $ and $a \geq b,c \geq d$. For an example of the plane partition represented by 
\begin{tabular}{l|l}
$2$ & $2$ \\ \hline
$1$ & $1$
\end{tabular},
see Figure~\ref{tableau_example}.  See Appendix~A for all the perfect matchings of $H_{2\times 2\times 2}$ and their corresponding plane partitions.

Let $\sigma \in \match_p(H)$ be a face in the perfect matching complex of the $2 \times 2 \times 2$ honeycomb graph $H$. We use the notation $\sigma \in $
\begin{tabular}{l|l}
$a$ & $b$ \\ \hline
$c$ & $d$
\end{tabular}
to denote a subset of the perfect matching corresponding with \begin{tabular}{l|l}
$a$ & $b$ \\ \hline
$c$ & $d$
\end{tabular}. If an entry in the plane partition can be $0, 1,$ or $2$, in accordance with the restrictions, we will denote it with $\ast$. For example,  
\begin{tabular}{l|l}
$2$ & 2\\ \hline
$\ast$ & $1$
\end{tabular} 
represents the plane partitions 
\begin{tabular}{l|l}
$2$ & $2$ \\ \hline
$1$ & $1$
\end{tabular} 
and 
\begin{tabular}{l|l}
$2$ & $2$ \\ \hline
$2$ & $1$
\end{tabular}.

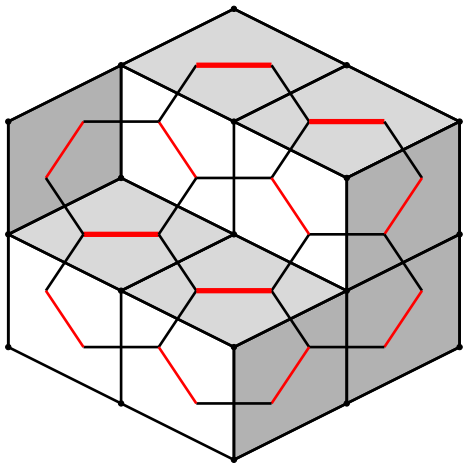
\begin{figure}[h]
\begin{center}
\begin{tikzpicture}[scale = 0.5]

\draw[line width = 1 pt, fill = gray!30] (7,12) -- (4,10.5) -- (7,9) -- (10, 10.5) -- (7, 12) ;

\draw[line width = 1 pt, fill = gray!30] (7,9) -- (10,10.5) -- (13,9) -- (10, 7.5) -- (7, 9) ;

\draw[line width = 1 pt, fill = gray!60] (13,9) -- (10,7.5) -- (10,4.5) -- (13, 6) -- (13, 9) ;

\draw[line width = 1 pt, fill = gray!60] (10,4.5) -- (13,6) -- (13,3) -- (10, 1.5) -- (10, 4.5) ;

\draw[line width = 1 pt, fill = gray!60] (10,1.5) -- (10,4.5) -- (7,3) -- (7, 0) -- (10, 1.5) ;

\draw[line width = 1 pt, fill = gray!30] (10,4.5) -- (7,3) -- (4,4.5) -- (7, 6) -- (10, 4.5) ;

\draw[line width = 1 pt, fill = gray!30] (4,4.5) -- (7,6) -- (4,7.5) -- (1, 6) -- (4, 4.5) ;

\draw[line width = 1 pt, fill = gray!60] (4,7.5) -- (1,6) -- (1,9) -- (4, 10.5) -- (4, 7.5) ;

\draw[red, line width = 1 pt] (2,7.5) -- (3,9);  
\draw[red, line width = 1 pt] (5,9) -- (6,7.5); 
\draw[line width = 1 pt] (6,10.5) -- (5,9);

\draw[red, line width = 2 pt] (6,10.5) -- (8,10.5);

\draw[line width = 1 pt] (8,10.5) -- (9,9);
\draw[line width = 1 pt] (9,9) -- (8,7.5);

\draw[red, line width = 2 pt] (9,9) -- (11,9);
\draw[line width = 1 pt] (12,7.5) -- (11,9);
\draw[red, line width = 1 pt] (12,7.5) -- (11,6);
\draw[line width = 1 pt] (9,6) -- (11,6);

\draw[line width = 1 pt] (11,6) -- (12,4.5);
\draw[red, line width = 1 pt] (12,4.5) -- (11,3);

\draw[line width = 1 pt] (11,3) -- (9,3);

\draw[line width = 1 pt] (6,7.5) -- (8,7.5); 

\draw[red, line width = 2 pt] (3,6) -- (5,6); 
\draw[line width = 1 pt] (5,6) -- (6,4.5); 
\draw[line width = 1 pt] (8,4.5) -- (9,6); 
\draw[line width = 1 pt] (8,4.5) -- (9,3); 
\draw[red, line width = 1 pt] (9,3) -- (8,1.5); 
\draw[line width = 1 pt] (8,1.5) -- (6,1.5);

\draw[line width = 1 pt] (5,3) -- (6,4.5);
\draw[line width = 1 pt] (3,3) -- (5,3);
\draw[line width = 1 pt] (2,4.5) -- (3,6);

\draw[line width = 1 pt] (3,9) -- (5,9); 
\draw[line width = 1 pt] (3,6) -- (2,7.5); 
\draw[red, line width = 1 pt] (3,3) -- (2, 4.5); 
\draw[line width = 1 pt] (5,6) -- (6, 7.5); 
\draw[red, line width = 1 pt] (9,6) -- (8, 7.5); 
\draw[red, line width = 2 pt] (6,4.5) -- (8, 4.5); 

\draw[red, line width = 1 pt] (6, 1.5) -- ( 5,3); 


\draw[line width = 1 pt] (7,12) -- (4,10.5) -- (7,9);
\draw[line width = 1 pt] (7,12) -- (10,10.5) -- (7,9);

\draw[line width = 1 pt] (7,9) -- (10,7.5) -- (13,9) -- (10,10.5);

\draw[line width = 1 pt] (13,9) -- (13,6) -- (10,4.5) -- (10, 7.5);
\draw[line width = 1 pt] (10,4.5) -- (10,1.5) -- (13, 3) -- (13, 6);
\draw[line width = 1 pt] (10,1.5) -- (7,0) -- (7,3) -- (10, 4.5);
\draw[line width = 1 pt] (7,0) -- (4,1.5) -- (4, 4.5) -- (7,3);
\draw[line width = 1 pt] (4,4.5) -- (7,6);
\draw[line width = 1 pt] (4,4.5) -- (1,6) -- (1,3) -- (4, 1.5);
\draw[line width = 1 pt] (1,3) -- (1,6) -- (4,7.5);
\draw[line width = 1 pt] (1,6) -- (1,9) -- (4, 10.5);

\draw[line width = 1 pt] (10,4.5) -- (7,6) -- (7, 9);
\draw[line width = 1 pt] (7,6) -- (4,7.5) -- (4, 10.5);

\filldraw[black] (7,12) circle (2pt);

\filldraw[black] (4, 10.5) circle (2pt);
\filldraw[black] (10, 10.5) circle (2pt);

\filldraw[black] (1,9) circle (2pt);
\filldraw[black] (7,9) circle (2pt);
\filldraw[black] (13, 9) circle (2pt);

\filldraw[black] (4,7.5) circle (2pt);
\filldraw[black] (10,7.5) circle (2pt);

\filldraw[black] (1, 6) circle (2pt);
\filldraw[black] (7, 6) circle (2pt);
\filldraw[black] (13, 6) circle (2pt);

\filldraw[black] (4, 4.5) circle (2pt);
\filldraw[black] (10, 4.5) circle (2pt);

\filldraw[black] (1, 3) circle (2pt);
\filldraw[black] (7, 3) circle (2pt);
\filldraw[black] (13,3) circle (2pt);

\filldraw[black] (4, 1.5) circle (2pt);
\filldraw[black] (10, 1.5) circle (2pt);

\filldraw[black] (7, 0) circle (2pt);

\end{tikzpicture}
\end{center}
\caption{Matching for plane partition 
\label{tableau_example}}
\end{figure}


\begin{remark}\label{rmk:observation} 
In the proof that follows we will use a sequence of element pairings to obtain the homotopy type.
Notice that if we perform a sequence of element pairings in which an element $\varepsilon$ has been paired on, we can categorize the remaining faces as those that contain $\varepsilon$ and those that do not. 
When we continue pairing with an element that has not previously been paired on, say $\lambda$, we can only pair faces of the same type. 
That is to say $\theta$ and $\theta \cup \lambda$ would be paired together only if $\varepsilon$ was in $\theta$ and $\theta \cup \lambda$, or $\varepsilon$ was not in $\theta$ and $\theta \cup \lambda$.  
\end{remark}

\begin{theorem} \label{thm:2by2by2}
Let $H$ be the honeycomb graph of dimension $2 \times 2 \times 2$. Then, 
\[ 
\match_p(H) \simeq S^3 \vee S^3.
\]
\end{theorem}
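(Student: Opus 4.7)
The plan is to construct an acyclic discrete Morse matching on the face poset of $\match_p(H_{2\times 2\times 2})$ whose critical cells consist of exactly two $3$-dimensional cells together with one $0$-cell (the empty set paired with a vertex). By Theorem~\ref{DMT}, this would immediately give $\match_p(H_{2\times 2\times 2})\simeq S^3\vee S^3$.

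Following the strategy of Theorems~\ref{1x2xn_sphere} and~\ref{1xmxn_contractible}, I would start with two element pairings $M(x)$ and $M(y)$ using the corner diagonal edges $x=d_{0,0}$ and $y=d_{2,2}$. Analogues of Lemma~\ref{lemma_xy} show that every perfect matching other than the ``all zeros'' plane partition contains $x$, and every perfect matching other than the ``all twos'' plane partition contains $y$. By the same argument as in Theorem~\ref{1x2xn_sphere}, every face left unpaired after $M(x)\cup M(y)$ has the form $\tau=\{y\}\cup\sigma$, where $\sigma$ is contained in the intersection $I$ of the two extremal matchings and $\tau\cup\{x\}\notin\match_p(H)$. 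Using the enumeration in Appendix~A and the analogue of Lemma~\ref{intersection_all0_alln}, I would identify $I$ explicitly as the set of interior diagonal edges of $H_{2\times 2\times 2}$, and then tabulate, for each subset $\sigma\subseteq I$, which of the remaining $18$ perfect matchings contain $\sigma\cup\{x,y\}$.

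Next, guided by Remark~\ref{rmk:observation}, I would apply further element pairings on carefully selected edges, ordering them so that Theorem~\ref{sequence_element_matchings} keeps the combined pairing acyclic. Each new pivot edge $\varepsilon$ pairs faces that agree on whether they contain the earlier pivots; this observation lets me localize the effect of each new pairing. The aim is to collapse all but two of the faces remaining after $M(x)\cup M(y)$. I expect the two surviving critical $3$-cells to arise from a pair of symmetric configurations of interior diagonals that extend to both extremal matchings but cannot be completed into any non-extremal matching by adjoining $x$; the dihedral symmetry of the $2\times 2\times 2$ honeycomb makes a pair of such obstructions the natural outcome, in contrast to the single critical cell produced in Theorem~\ref{1x2xn_sphere}.

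The main obstacle will be the combinatorial case analysis. With $20$ perfect matchings and a much richer intersection pattern than in the line-of-hexagons case, one must check carefully that the chosen sequence of element pairings leaves exactly two critical $3$-cells and pairs every other face. For each candidate pivot edge used after $M(x)$ and $M(y)$, I would use Proposition~\ref{remark:edge_direction} to determine which residual faces it pairs, recording this information across the $18$ intermediate plane partitions. Once the two critical $3$-cells have been pinned down and the remaining cells fully accounted for, Theorem~\ref{DMT} produces the desired homotopy equivalence $\match_p(H_{2\times 2\times 2})\simeq S^3\vee S^3$.
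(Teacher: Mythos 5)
The first substantive step of your plan rests on a claim that is false for $H_{2\times 2\times 2}$: there is no analogue of Lemma~\ref{lemma_xy} for corner edges once $k=2$. A corner edge of the hexagon records only the height of the single nearest column of cubes, so its membership in a perfect matching is governed by \emph{one} entry of the $2\times 2$ plane partition, not by whether the partition is extremal. Concretely, in the labelling of Figure~\ref{fig:2by2by2proof}, the bottom-left corner edge $c_{0,0}$ lies in a perfect matching if and only if the bottom-left entry of its plane partition is positive, so it is missing not only from the all-zeros matching but from all six matchings whose bottom-left entry is $0$ (for instance the one with rows $2\,2$ and $0\,0$); dually, the top-right corner edge $c_{3,3}$ is missing from every matching whose top-right entry equals $2$ (again six of them), not just the all-twos matching. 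One can confirm this directly from the Appendix~A diagrams. Consequently $\match_p(H)$ is not the union of two cones with apices $x$ and $y$, the faces left unpaired after $M(x)\cup M(y)$ are not of the form $\{y\}\cup\sigma$ with $\sigma$ contained in the intersection of the two extremal facets, and the rest of your outline has nothing to stand on.

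This failure is precisely why the paper changes strategy for this case: its proof pivots instead on the four horizontal edges $\alpha,\beta,\gamma,\delta$ of the middle column of hexagons, whose membership conditions (observations (1)--(4) in the proof of Theorem~\ref{thm:2by2by2}) are unions of at most two ``rectangular'' families of plane partitions, and even then a lengthy case analysis plus several further cleanup element pairings (on $c_{1,0}$, $b_{0,2}$, $c_{3,3}$, $c_{0,0}$, $b_{3,2}$) is needed before the two critical $3$-cells $\{b_{1,2},c_{2,2}\}\cup\{\gamma,\delta\}$ and $\{c_{1,1},b_{2,2}\}\cup\{\beta,\delta\}$ emerge. Independently of the false lemma, your proposal never specifies the later pivot edges, nor verifies that exactly two critical cells survive, so even on its own terms it is a programme rather than a proof; but as stated the programme cannot be carried out, because its first reduction fails.
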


\begin{proof}
We will proceed by defining a discrete Morse matching on $\match_p(H)$ given by pairing on $\alpha$, then $\beta$, then $\gamma$, and finally $\delta$ according to the labels in Figure~\ref{fig:2by2by2proof}. 

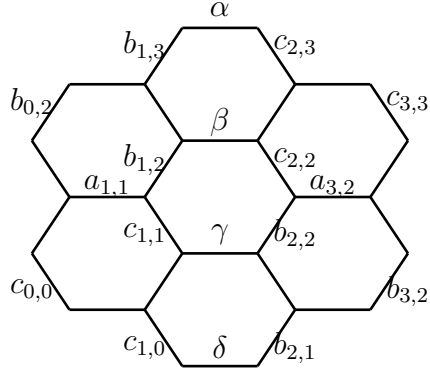
\begin{figure}[h]
\begin{center}
\begin{tikzpicture}[scale = 0.5]

\draw[line width = 1 pt] (2,7.5) -- (3,9);  
\draw[line width = 1 pt] (5,9) -- (6,7.5); 
\draw[line width = 1 pt] (6,10.5) -- (5,9);

\draw[line width = 1 pt] (6,10.5) -- (8,10.5);
\node at (7,11) {$\alpha$};

\draw[line width = 1 pt] (8,10.5) -- (9,9);
\draw[line width = 1 pt] (9,9) -- (8,7.5);

\draw[line width = 1 pt] (9,9) -- (11,9);
\draw[line width = 1 pt] (12,7.5) -- (11,9);
\draw[line width = 1 pt] (12,7.5) -- (11,6);
\draw[line width = 1 pt] (9,6) -- (11,6);

\draw[line width = 1 pt] (11,6) -- (12,4.5);
\draw[line width = 1 pt] (12,4.5) -- (11,3);

\draw[line width = 1 pt] (11,3) -- (9,3);

\draw[line width = 1 pt] (6,7.5) -- (8,7.5); 
\node at (7,8) {$\beta$};

\draw[line width = 1 pt] (3,6) -- (5,6); 
\draw[line width = 1 pt] (5,6) -- (6,4.5); 
\draw[line width = 1 pt] (8,4.5) -- (9,6); 
\draw[line width = 1 pt] (8,4.5) -- (9,3); 
\draw[line width = 1 pt] (9,3) -- (8,1.5); 
\draw[line width = 1 pt] (8,1.5) -- (6,1.5);
\node at (7,2) {$\delta$};

\draw[line width = 1 pt] (5,3) -- (6,4.5);
\draw[line width = 1 pt] (3,3) -- (5,3);
\draw[line width = 1 pt] (2,4.5) -- (3,6);

\draw[line width = 1 pt] (3,9) -- (5,9); 
\draw[line width = 1 pt] (3,6) -- (2,7.5); 
\draw[line width = 1 pt] (3,3) -- (2, 4.5); 
\draw[line width = 1 pt] (5,6) -- (6, 7.5); 
\draw[line width = 1 pt] (9,6) -- (8, 7.5); 
\draw[line width = 1 pt] (6,4.5) -- (8, 4.5); 
\node at (7,5) {$\gamma$};

\draw[line width = 1 pt] (6, 1.5) -- ( 5,3); 

\node at (5, 10) {$b_{1,3}$}; 
\node at (5, 7) {$b_{1,2}$}; 
\node at (5, 5) {$c_{1,1}$}; 
\node at (5, 2) {$c_{1,0}$};

\node at (2, 3.5) {$c_{0,0}$};
\node at (2, 8.5) {$b_{0,2}$};

\node at (9, 10) {$c_{2,3}$}; 
\node at (9, 7) {$c_{2,2}$}; 
\node at (9, 5) {$b_{2,2}$}; 
\node at (9, 2) {$b_{2,1}$};

\node at (12, 3.5) {$b_{3,2}$};
\node at (12, 8.5) {$c_{3,3}$};

\node at (4,6.27) {$a_{1,1}$};
\node at (10,6.27) {$a_{3,2}$};

\end{tikzpicture}
\end{center}
\caption{Figure for the proof of Theorem~\ref{thm:2by2by2}.} \label{fig:2by2by2proof} 
\end{figure}


For faces $\sigma \in \match_p(H)$, we make the following observations: 
\begin{itemize}
    \item[(1)] $\sigma \cup \{\alpha\}$ is a face if and only if $\sigma \in $
    \begin{tabular}{l|l}
$2$ & $\ast$\\ \hline
$\ast$ & $\ast$
\end{tabular}, 
\item[(2)] $\sigma \cup \{\beta\}$ is a face if and only if $\sigma \in$
\begin{tabular}{l|l}
$2$ & $2$\\ \hline
$2$ & $2$
\end{tabular} 
or $\sigma \in$
\begin{tabular}{l|l}
$1$ & $\ast$\\ \hline
$\ast$ & $\ast$
\end{tabular},
\item[(3)] $\sigma \cup \{\gamma\}$ is a face if and only if 
$\sigma \in$
\begin{tabular}{l|l}
$0$ & $0$\\ \hline
$0$ & $0$
\end{tabular} 
or $\sigma \in$
\begin{tabular}{l|l}
$\ast$ & $\ast$\\ \hline
$\ast$ & $1$
\end{tabular}, and
\item[(4)] $\sigma \cup \{\delta\}$ is a face if and only if 
$\sigma \in$
\begin{tabular}{l|l}
$\ast$ & $\ast$\\ \hline
$\ast$ & $0$
\end{tabular}.
\end{itemize}

We begin by an element pairing on $\alpha$ and then $\beta$. There are two types of unpaired simplices that remain. 
The first type (t1) consists of faces $\sigma \in \match_p(H)$ such that $\alpha, \beta \not \in \sigma$, and $\sigma$ cannot be paired with $\sigma \cup \alpha$ or $\sigma \cup \beta$,  because simplices $\sigma \cup \alpha$ and $\sigma \cup \beta$ do not exist in $\match_p(H)$. Therefore, by observations $(1)$ and $(2)$ $\sigma \not \in \begin{tabular}{l|l}
$2$ & $\ast$\\ \hline
$\ast$ & $\ast$ 
\end{tabular}$, 
$\sigma \not \in \begin{tabular}{l|l}
$2$ & $2$\\ \hline
$2$ & $2$ 
\end{tabular}$, and 
$\sigma \not \in \begin{tabular}{l|l}
$1$ & $\ast$\\ \hline
$\ast$ & $\ast$ 
\end{tabular}$. It follows that $\sigma$ is only in $\begin{tabular}{l|l}
$0$ & $0$\\ \hline
$0$ & $0$ 
\end{tabular}$ and $\alpha, \beta \not \in \sigma$.

The second type (t2) consists of faces $\sigma \cup \beta \in \match_p(H)$, which remain because $\sigma$ has been previously paired with $\sigma \cup \alpha$, and $\sigma \cup \beta \cup \alpha \not \in \match_p(H)$. 
There exists a face $\sigma \cup \alpha$, so $\sigma \in \begin{tabular}{l|l}
$2$ & $\ast$\\ \hline
$\ast$ & $\ast$ 
\end{tabular}$. Similarly, $\sigma \cup \beta$ is a face, so $\sigma \in \begin{tabular}{l|l}
$1$ & $\ast$\\ \hline
$\ast$ & $\ast$ 
\end{tabular}$ or $\sigma \in \begin{tabular}{l|l}
$2$ & $2$\\ \hline
$2$ & $2$ 
\end{tabular}$. But $\sigma \cup \beta \cup \alpha$ is not a face, so $\sigma$ is not in $\begin{tabular}{l|l}
$2$ & $2$\\ \hline
$2$ & $2$ 
\end{tabular}$. Therefore, $(t2)$ is the set of faces $\sigma \cup \beta$ such that $\alpha,\beta \not \in \sigma$, $\sigma \in \begin{tabular}{l|l}
$2$ & $\ast$\\ \hline
$\ast$ & $\ast$ 
\end{tabular}$ 
and 
$\sigma \in \begin{tabular}{l|l}
$1$ & $\ast$\\ \hline
$\ast$ & $\ast$ 
\end{tabular}$, and $\sigma \not \in \begin{tabular}{l|l}
$2$ & $2$\\ \hline
$2$ & $2$ 
\end{tabular}$.

By Remark~\ref{rmk:observation}, we know that in the remaining element pairings if two faces are paired they have to be of the same type. We perform our next pairing  with $\gamma$ and analyze what faces remain unpaired. For each of the above types we will consider faces that contain $\gamma$ and those that do not.  

Let $\theta \in \match_p(H)$. 

\textbf{Case 1: Suppose $\theta$ is of type $(t1)$ and $\gamma \not \in \theta$.} Since $\theta$ is type $(t1)$, $\theta$ is only in 
$\begin{tabular}{l|l}
$0$ & $0$\\ \hline
$0$ & $0$ 
\end{tabular}$
but, it is also the case that $\gamma \in 
\begin{tabular}{l|l}
$0$ & $0$\\ \hline
$0$ & $0$ 
\end{tabular}$. 
This means that $\theta \cup \gamma$ is only in 
\begin{tabular}{l|l}
$0$ & $0$\\ \hline
$0$ & $0$ 
\end{tabular}, so $\theta$ and $\theta \cup \gamma$ are paired and there are no unpaired faces that remain from this case. 

\textbf{Case 2: Suppose $\theta \cup \gamma$ is of type $(t1)$.} Then $\theta \cup \gamma$ is only in 
$\begin{tabular}{l|l}
$0$ & $0$\\ \hline
$0$ & $0$ 
\end{tabular}$. The only way this face could be unpaired  is if $\theta$ is not only in 
\begin{tabular}{l|l}
$0$ & $0$\\ \hline
$0$ & $0$ 
\end{tabular}. 
Hence the faces that remain are all $\theta \cup \gamma$ such that $\alpha, \beta, \gamma \not \in \theta$ and $\theta \cup \gamma$ is only in $\begin{tabular}{l|l}
$0$ & $0$\\ \hline
$0$ & $0$ 
\end{tabular}$ and either
$\theta \in 
\begin{tabular}{l|l}
$2$ & $\ast$\\ \hline
$\ast$ & $\ast$ 
\end{tabular}$ or 
$\theta \in 
\begin{tabular}{l|l}
$1$ & $\ast$\\ \hline
$\ast$ & $\ast$ 
\end{tabular}$. We call these faces types $(t1.1)$. 

\textbf{Case 3: Suppose $\theta$ is of type $(t2)$ and $\gamma \not \in \theta$.} Since $\theta$ is of type $(t2)$ $\theta = \sigma \cup \beta$ where $\alpha, \beta \not \in \sigma$ and $\sigma \in 
\begin{tabular}{l|l}
$2$ & $\ast$\\ \hline
$\ast$ & $\ast$ 
\end{tabular}$, and 
$\sigma \in 
\begin{tabular}{l|l}
$1$ & $\ast$\\ \hline
$\ast$ & $\ast$ 
\end{tabular}$, and 
$\sigma \not \in 
\begin{tabular}{l|l}
$2$ & $2$\\ \hline
$2$ & $2$ 
\end{tabular}$. Since we are supposing that $\theta$ is unpaired, $\theta \cup \gamma = \sigma \cup \gamma \cup \beta$ is not of type $(t2)$.
Therefore,  $\sigma \cup \gamma \not \in 
\begin{tabular}{l|l}
$2$ & $\ast$\\ \hline
$\ast$ & $\ast$ 
\end{tabular}$ or 
$\sigma \cup \gamma \not \in 
\begin{tabular}{l|l}
$1$ & $\ast$\\ \hline
$\ast$ & $\ast$ 
\end{tabular}$ 
or 
$\sigma \cup \gamma \in 
\begin{tabular}{l|l}
$2$ & $2$\\ \hline
$2$ & $2$ 
\end{tabular}$. Notice the last condition is not possible since $\gamma \not \in 
\begin{tabular}{l|l}
$2$ & $2$\\ \hline
$2$ & $2$ 
\end{tabular}$ and, since $\gamma$ is in a face if and only if the face is in 
$\begin{tabular}{l|l}
$\ast$ & $\ast$\\ \hline
$\ast$ & $1$ 
\end{tabular}$ or in 
\begin{tabular}{l|l}
$0$ & $0$\\ \hline
$0$ & $0$ 
\end{tabular}, we can rewrite the above statement. That is, since $\theta \cup \gamma$ is not of type $(t2)$,  
$\sigma \not \in 
\begin{tabular}{l|l}
$2$ & $\ast$\\ \hline
$\ast$ & $1$ 
\end{tabular}$ or $\sigma \not \in 
\begin{tabular}{l|l}
$1$ & $1$\\ \hline
$1$ & $1$ 
\end{tabular}$. 
Hence the unpaired faces that remain are $\sigma \cup \beta$ where $ \alpha, \beta, \gamma \not \in \sigma$, $\sigma \in 
\begin{tabular}{l|l}
$2$ & $\ast$\\ \hline
$\ast$ & $\ast$ 
\end{tabular}$
and 
$\sigma \in 
\begin{tabular}{l|l}
$1$ & $\ast$\\ \hline
$\ast$ & $\ast$ 
\end{tabular}$
and $\sigma \not \in 
\begin{tabular}{l|l}
$2$ & $2$\\ \hline
$2$ & $2$ 
\end{tabular}$
and either $\sigma$ is not in $
\begin{tabular}{l|l}
$2$ & $\ast$\\ \hline
$\ast$ & $1$ 
\end{tabular}$
or $\sigma$ is not in 
\begin{tabular}{l|l}
$1$ & $1$\\ \hline
$1$ & $1$ 
\end{tabular}. 
We call this type $(t2.1)$.

\textbf{Case 4: Suppose $\theta \cup \gamma$ is of type $(t2)$.} Then, $\theta \cup \gamma = \sigma \cup \beta \cup \gamma$ and, as argued in Case 3, $\sigma \in 
\begin{tabular}{l|l}
$2$ & $\ast$\\ \hline
$\ast$ & $1$ 
\end{tabular}$ 
and 
$\sigma \in 
\begin{tabular}{l|l}
$1$ & $1$\\ \hline
$1$ & $1$ 
\end{tabular}$. Since we are assuming that $\theta \cup \gamma = \sigma \cup \gamma \cup \beta$ is unpaired this must be because $\theta = \sigma \cup \beta$ is not of type $(t2)$. This implies that $\theta \not \in 
\begin{tabular}{l|l}
$2$ & $\ast$\\ \hline
$\ast$ & $\ast$ 
\end{tabular}$, which cannot be the case because $\sigma \cup \gamma \in 
\begin{tabular}{l|l}
$2$ & $\ast$\\ \hline
$\ast$ & $1$ 
\end{tabular}$, or 
$\sigma \not \in 
\begin{tabular}{l|l}
$1$ & $\ast$\\ \hline
$\ast$ & $\ast$ 
\end{tabular}$, which cannot be the case because $\sigma \cup \gamma \in 
\begin{tabular}{l|l}
$1$ & $1$\\ \hline
$1$ & $1$ 
\end{tabular}$, or 
$\sigma \in 
\begin{tabular}{l|l}
$2$ & $2$\\ \hline
$2$ & $2$ 
\end{tabular}$. It follows that, for this case, all unpaired faces are $\sigma \cup \beta \cup \gamma$ such that $\alpha, \beta, \gamma \not \in \sigma$ and $\sigma \in 
\begin{tabular}{l|l}
$2$ & $\ast$\\ \hline
$\ast$ & $1$ 
\end{tabular}$ and 
$\sigma \in 
\begin{tabular}{l|l}
$1$ & $1$\\ \hline
$1$ & $1$ 
\end{tabular}$
and 
$\sigma \in 
\begin{tabular}{l|l}
$2$ & $2$\\ \hline
$2$ & $2$ 
\end{tabular}$. We call this type $(t2.2)$.

We are now ready to pair using $\delta$. Consider first the faces of type $(t2.2)$. 

\textbf{Case 4.1: Suppose $\theta$ is of type $(t2.2)$ and $\delta \in \theta$.}  Then $\delta \in \sigma.$ This case cannot occur because in type (t2.2) 
$\sigma \in 
\begin{tabular}{l|l}
$1$ & $1$\\ \hline
$1$ & $1$ 
\end{tabular}$
and $\delta \not\in \begin{tabular}{l|l}
$1$ & $1$\\ \hline
$1$ & $1$ 
\end{tabular}. $

\textbf{Case 4.2: Suppose $\theta$ is of type $(t2.2)$ and $\delta \not \in \theta$.} That is, $\theta = \sigma \cup \beta \cup 
\gamma$ such that $\alpha, \beta, \gamma, \delta \not \in \sigma$ and 
$\sigma \in 
\begin{tabular}{l|l}
$2$ & $\ast$\\ \hline
$\ast$ & $1$ 
\end{tabular}$ and
$\sigma \in 
\begin{tabular}{l|l}
$1$ & $1$\\ \hline
$1$ & $1$ 
\end{tabular}$
and 
$\sigma \in 
\begin{tabular}{l|l}
$2$ & $2$\\ \hline
$2$ & $2$ 
\end{tabular}$. For the faces left unpaired, it must be the case that $\sigma \cup \delta \cup \beta \cup \gamma$ is not of type $(t2.2)$. That is, $\sigma \cup \delta \not \in 
\begin{tabular}{l|l}
$2$ & $\ast$\\ \hline
$\ast$ & $1$ 
\end{tabular}$ or
$\sigma \cup \delta \not \in 
\begin{tabular}{l|l}
$1$ & $1$\\ \hline
$1$ & $1$ 
\end{tabular}$
or  
$\sigma \cup \delta \not \in 
\begin{tabular}{l|l}
$2$ & $2$\\ \hline
$2$ & $2$ 
\end{tabular}$, which is true since $\delta \not\in \begin{tabular}{l|l}
$1$ & $1$\\ \hline
$1$ & $1$ 
\end{tabular}$. 
Therefore, the unpaired cells are $\sigma \cup \beta \cup \gamma$ such that $ \alpha, \beta, \gamma, \delta \not \in \sigma$ and 
$\sigma \in 
\begin{tabular}{l|l}
$2$ & $\ast$\\ \hline
$\ast$ & $1$ 
\end{tabular}$ and
$\sigma \in 
\begin{tabular}{l|l}
$1$ & $1$\\ \hline
$1$ & $1$ 
\end{tabular}$
and 
$\sigma \in 
\begin{tabular}{l|l}
$2$ & $2$\\ \hline
$2$ & $2$ 
\end{tabular}$. When we consider the intersection 
$ 
\begin{tabular}{l|l}
$2$ & $\ast$\\ \hline
$\ast$ & $1$ 
\end{tabular}$
$\bigcap 
\begin{tabular}{l|l}
$1$ & $1$\\ \hline
$1$ & $1$ 
\end{tabular}$
$\bigcap 
\begin{tabular}{l|l}
$2$ & $2$\\ \hline
$2$ & $2$ 
\end{tabular}$ 
we have $c_{0,0}, c_{1,0}, b_{2,1},$ and $b_{3,2}$. See Figure~\ref{fig:first_intersection}. 
These four edges are in the intersection
$ 
\begin{tabular}{l|l}
$2$ & $2$\\ \hline
$2$ & $1$ 
\end{tabular}$
$\bigcap 
\begin{tabular}{l|l}
$1$ & $1$\\ \hline
$1$ & $1$ 
\end{tabular}$
$\bigcap 
\begin{tabular}{l|l}
$2$ & $2$\\ \hline
$2$ & $2$ 
\end{tabular}$.
One can verify that these are the only edges in the intersection 
$ 
\begin{tabular}{l|l}
$2$ & $\ast$\\ \hline
$\ast$ & $1$ 
\end{tabular}$
$\bigcap 
\begin{tabular}{l|l}
$1$ & $1$\\ \hline
$1$ & $1$ 
\end{tabular}$
$\bigcap 
\begin{tabular}{l|l}
$2$ & $2$\\ \hline
$2$ & $2$ 
\end{tabular}$.

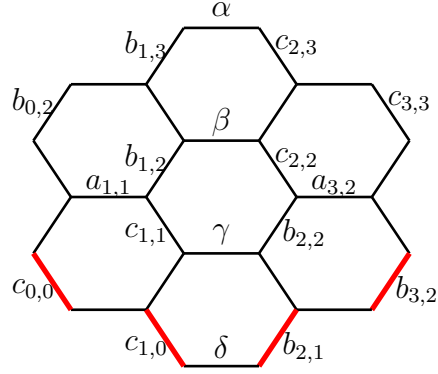
\begin{figure}[h]
\begin{center}
\begin{tikzpicture}[scale = 0.5]

\draw[line width = 1 pt] (2,7.5) -- (3,9);  
\draw[line width = 1 pt] (5,9) -- (6,7.5); 
\draw[line width = 1 pt] (6,10.5) -- (5,9);

\draw[line width = 1 pt] (6,10.5) -- (8,10.5);
\node at (7,11) {$\alpha$};

\draw[line width = 1 pt] (8,10.5) -- (9,9);
\draw[line width = 1 pt] (9,9) -- (8,7.5);

\draw[line width = 1 pt] (9,9) -- (11,9);
\draw[line width = 1 pt] (12,7.5) -- (11,9);
\draw[line width = 1 pt] (12,7.5) -- (11,6);
\draw[line width = 1 pt] (9,6) -- (11,6);

\draw[line width = 1 pt] (11,6) -- (12,4.5);
\draw[line width = 2 pt, red] (12,4.5) -- (11,3);

\draw[line width = 1 pt] (11,3) -- (9,3);

\draw[line width = 1 pt] (6,7.5) -- (8,7.5); 
\node at (7,8) {$\beta$};

\draw[line width = 1 pt] (3,6) -- (5,6); 
\draw[line width = 1 pt] (5,6) -- (6,4.5); 
\draw[line width = 1 pt] (8,4.5) -- (9,6); 
\draw[line width = 1 pt] (8,4.5) -- (9,3); 
\draw[line width = 2 pt, red] (9,3) -- (8,1.5); 
\draw[line width = 1 pt] (8,1.5) -- (6,1.5);
\node at (7,2) {$\delta$};

\draw[line width = 1 pt] (5,3) -- (6,4.5);
\draw[line width = 1 pt] (3,3) -- (5,3);
\draw[line width = 1 pt] (2,4.5) -- (3,6);

\draw[line width = 1 pt] (3,9) -- (5,9); 
\draw[line width = 1 pt] (3,6) -- (2,7.5); 
\draw[line width = 2 pt, red] (3,3) -- (2, 4.5); 
\draw[line width = 1 pt] (5,6) -- (6, 7.5); 
\draw[line width = 1 pt] (9,6) -- (8, 7.5); 
\draw[line width = 1 pt] (6,4.5) -- (8, 4.5); 
\node at (7,5) {$\gamma$};

\draw[line width = 2 pt, red] (6, 1.5) -- (5,3); 

\node at (5, 10) {$b_{1,3}$}; 
\node at (5, 7) {$b_{1,2}$}; 
\node at (5, 5) {$c_{1,1}$}; 
\node at (5, 2) {$c_{1,0}$};

\node at (2, 3.5) {$c_{0,0}$};
\node at (2, 8.5) {$b_{0,2}$};

\node at (9, 10) {$c_{2,3}$}; 
\node at (9, 7) {$c_{2,2}$}; 
\node at (9.2, 5) {$b_{2,2}$}; 
\node at (9.2, 2) {$b_{2,1}$};

\node at (12.2, 3.5) {$b_{3,2}$};
\node at (12, 8.5) {$c_{3,3}$};

\node at (4,6.27) {$a_{1,1}$};
\node at (10,6.27) {$a_{3,2}$};

\end{tikzpicture}
\end{center}
\caption{The highlighted edges used in Case 4.2.}
\label{fig:first_intersection}
\end{figure}


Therefore, $\sigma \in \mathcal{P}(c_{0,0},c_{1,0}, b_{2,1}, b_{3,2})$, the 
power set of $\{c_{0,0}, c_{1,0}, b_{2,1}, b_{3,2}\}$.  So the faces $\theta$ in Case~4.2 form an interval in the face poset of $\match_p(H)$ isomorphic to the Boolean lattice, and so can be paired using an element pairing with, say, $c_{1,0}$. 

We proceed with the analysis of element pairing using $\delta$ for unpaired faces of type (t1.1) from Case 2.

\textbf{Case 2.1: Suppose $\theta$ is of type $(t1.1)$ and $\delta \not \in \theta$.} 
That is, $\theta = \sigma \cup \gamma$ such that $\alpha, \beta, \gamma, \delta \not \in \sigma$, $\sigma \cup \gamma$ is only in 
$\begin{tabular}{l|l}
$0$ & $0$\\ \hline
$0$ & $0$ 
\end{tabular}$ and $\sigma \in 
\begin{tabular}{l|l}
$2$ & $\ast$\\ \hline
$\ast$ & $\ast$ 
\end{tabular}$ or 
$\sigma \in 
\begin{tabular}{l|l}
$1$ & $\ast$\\ \hline
$\ast$ & $\ast$ 
\end{tabular}$.
For the faces left unpaired, it must be that $\sigma \cup \delta \cup \gamma$ is not of type $(t1.1)$.
Therefore, it is either the case that $\sigma \cup \delta \cup \gamma$ is not only in $
\begin{tabular}{l|l}
$0$ & $0$\\ \hline
$0$ & $0$ 
\end{tabular}$ or that 
$\sigma \cup \delta \not \in
\begin{tabular}{l|l}
$2$ & $\ast$\\ \hline
$\ast$ & $\ast$ 
\end{tabular}$ and 
$\sigma \cup \delta \not \in
\begin{tabular}{l|l}
$1$ & $\ast$\\ \hline
$\ast$ & $\ast$ 
\end{tabular}$, but notice if 
$\sigma \cup \delta \cup \gamma$ is not only in $ 
\begin{tabular}{l|l}
$0$ & $0$\\ \hline
$0$ & $0$ 
\end{tabular}$ then 
$\sigma \cup \gamma$ is not only in  $
\begin{tabular}{l|l}
$0$ & $0$\\ \hline
$0$ & $0$ 
\end{tabular}$, which is a contradiction to $\theta$ being type $(t1.1)$. 
Thus, it must be the case that 
$\sigma \cup \delta \not \in
\begin{tabular}{l|l}
$2$ & $\ast$\\ \hline
$\ast$ & $\ast$ 
\end{tabular}$ and 
$\sigma \cup \delta \not \in
\begin{tabular}{l|l}
$1$ & $\ast$\\ \hline
$\ast$ & $\ast$ 
\end{tabular}$ and, in particular,
$\sigma \not \in
\begin{tabular}{l|l}
$2$ & $\ast$\\ \hline
$\ast$ & $0$ 
\end{tabular}$ and 
$\sigma \not \in
\begin{tabular}{l|l}
$1$ & $\ast$\\ \hline
$\ast$ & $0$ 
\end{tabular}$. It follows that the faces left unpaired are those such that $\sigma \cup \gamma$ is only in 
$\begin{tabular}{l|l}
$0$ & $0$\\ \hline
$0$ & $0$ 
\end{tabular}$
and 
$ ( \sigma \in 
\begin{tabular}{l|l}
$2$ & $2$\\ \hline
$2$ & $2$ 
\end{tabular}$ or 
$\sigma \in 
\begin{tabular}{l|l}
$2$ & $\ast$\\ \hline
$\ast$ & $1$ 
\end{tabular}$ or 
$\sigma \in 
\begin{tabular}{l|l}
$1$ & $1$\\ \hline
$1$ & $1$ 
\end{tabular} )$. 
Notice it cannot be the case that 
$\sigma \in 
\begin{tabular}{l|l}
$2$ & $\ast$\\ \hline
$\ast$ & $1$ 
\end{tabular}$ or 
$\sigma \in 
\begin{tabular}{l|l}
$1$ & $1$\\ \hline
$1$ & $1$ 
\end{tabular}$ 
because 
$\sigma \cup \gamma$ is only in 
$\begin{tabular}{l|l}
$0$ & $0$\\ \hline
$0$ & $0$ 
\end{tabular}$. 
So we are left with 
$\sigma \cup \gamma$ only in 
$\begin{tabular}{l|l}
$0$ & $0$\\ \hline
$0$ & $0$ 
\end{tabular}$
and 
$\sigma \in 
\begin{tabular}{l|l}
$2$ & $2$\\ \hline
$2$ & $2$ 
\end{tabular}$, 
but this implies that $\sigma$ is an empty face,
 because $\begin{tabular}{l|l}
$0$ & $0$\\ \hline
$0$ & $0$ 
\end{tabular} \cap \begin{tabular}{l|l}
$2$ & $2$\\ \hline
$2$ & $2$ 
\end{tabular}= \{\emptyset\}.$
Therefore, $\sigma \cup \gamma = \gamma$, and this is a contradiction because $\gamma$ is not only in 
$\begin{tabular}{l|l}
$0$ & $0$\\ \hline
$0$ & $0$ 
\end{tabular}$. Hence, there are no unpaired faces for this case. 

\textbf{Case 2.2: Suppose $\theta$ is of type $(t1.1)$ and $\delta \in \theta$.} That is, $\theta = \sigma \cup \delta \cup \gamma$ where $\alpha, \beta, \gamma, \delta \not \in \sigma$, $\sigma \cup \delta \cup \gamma$ is only in 
$\begin{tabular}{l|l}
$0$ & $0$\\ \hline
$0$ & $0$ 
\end{tabular}$ and 
 $\sigma \cup \delta \in 
\begin{tabular}{l|l}
$2$ & $\ast$\\ \hline
$\ast$ & $\ast$ 
\end{tabular}$ or 
$\sigma \cup \delta \in 
\begin{tabular}{l|l}
$1$ & $\ast$\\ \hline
$\ast$ & $\ast$ 
\end{tabular}$.
For the faces left unpaired, it must be that $\sigma \cup \gamma$ is not of type $(t1.1)$. Therefore, it is either the case that $\sigma \cup \gamma$ is not only in 
$\begin{tabular}{l|l}
$0$ & $0$\\ \hline
$0$ & $0$ 
\end{tabular}$ 
or 
$\sigma \not \in 
\begin{tabular}{l|l}
$2$ & $\ast$\\ \hline
$\ast$ & $\ast$ 
\end{tabular}$
and
$\sigma \not \in 
\begin{tabular}{l|l}
$1$ & $\ast$\\ \hline
$\ast$ & $\ast$ 
\end{tabular}$, but $\sigma \cup \delta$ 
is already in 
$
\begin{tabular}{l|l}
$2$ & $\ast$\\ \hline
$\ast$ & $\ast$ 
\end{tabular}$ or 
$ 
\begin{tabular}{l|l}
$1$ & $\ast$\\ \hline
$\ast$ & $\ast$ 
\end{tabular}$. 
Therefore, it must be that 
$\sigma \cup \gamma$ is not only in 
$\begin{tabular}{l|l}
$0$ & $0$\\ \hline
$0$ & $0$ 
\end{tabular}$.
Hence we have that all unpaired faces of this type are such that $\sigma \in 
$
$\begin{tabular}{l|l}
$0$ & $0$\\ \hline
$0$ & $0$ 
\end{tabular}$ 
and 
($\sigma \in 
\begin{tabular}{l|l}
$2$ & $\ast$\\ \hline
$\ast$ & $0$ 
\end{tabular}$ or
$\sigma \in 
\begin{tabular}{l|l}
$1$ & $\ast$\\ \hline
$\ast$ & $0$ 
\end{tabular}$) 
and 
($\sigma \in 
\begin{tabular}{l|l}
$2$ & $\ast$\\ \hline
$\ast$ & $1$ 
\end{tabular}$ or
$\sigma \in 
\begin{tabular}{l|l}
$1$ & $1$\\ \hline
$1$ & $1$ 
\end{tabular}$). 

These matchings $\sigma$ are subsets of the highlighted edges
in Figure~\ref{fig:second_intersection}.
It can be checked that no other edges are in a matching $\sigma$ of this type.  
The edges $b_{0,2}$, $b_{1,3}$, $c_{2,3}$, and $c_{3,3}$ are all in 
$\begin{tabular}{l|l}
$0$ & $0$\\ \hline
$0$ & $0$ 
\end{tabular} \bigcap 
\begin{tabular}{l|l}
$1$ & $1$\\ \hline
$1$ & $0$ 
\end{tabular}  \bigcap
\begin{tabular}{l|l}
$1$ & $1$\\ \hline
$1$ & $1$ 
\end{tabular}$. 
So the unpaired faces $\sigma\cup \delta \cup \gamma$ include those
for which $\sigma\in P(b_{0,2}, b_{1,3}, c_{2,3}, c_{3,3})$.
The edges $b_{1,2}$ and $c_{2,2}$ are both in 
$\begin{tabular}{l|l}
$0$ & $0$\\ \hline
$0$ & $0$ 
\end{tabular} \bigcap 
\begin{tabular}{l|l}
$2$ & $2$\\ \hline
$2$ & $0$ 
\end{tabular}  \bigcap
\begin{tabular}{l|l}
$2$ & $2$\\ \hline
$2$ & $1$ 
\end{tabular}$. 
The edges $b_{1,2}$ and $c_{3,3}$ are both in 
$\begin{tabular}{l|l}
$0$ & $0$\\ \hline
$0$ & $0$ 
\end{tabular} \bigcap 
\begin{tabular}{l|l}
$2$ & $0$\\ \hline
$2$ & $0$ 
\end{tabular}  \bigcap
\begin{tabular}{l|l}
$2$ & $1$\\ \hline
$2$ & $1$ 
\end{tabular}$. 
The edges $b_{0,2}$ and $c_{2,2}$ are both in 
$\begin{tabular}{l|l}
$0$ & $0$\\ \hline
$0$ & $0$ 
\end{tabular} \bigcap 
\begin{tabular}{l|l}
$2$ & $2$\\ \hline
$0$ & $0$ 
\end{tabular}  \bigcap
\begin{tabular}{l|l}
$2$ & $2$\\ \hline
$1$ & $1$ 
\end{tabular}$. 
So the unpaired faces $\sigma\cup \delta \cup \gamma$ include those
for which 
$\sigma\in  \{ \{b_{1,2}\}, \{c_{2,2}\}, \{b_{1,2}, c_{2,2}\}, \{b_{1,2}, c_{3,3}\}, \{b_{0,2}, c_{2,2}\}\}$.
It is straightforward to check that these describe all the unpaired faces.
Thus, the unpaired faces in this case are $\sigma \cup 
\delta \cup \gamma$ where $\sigma \in P(b_{0,2}, b_{1,3}, c_{3,3}, c_{2,3}) \cup
 \{ \{b_{1,2}\}, \{c_{2,2}\}, \{b_{1,2}, c_{2,2}\}, \{b_{1,2}, c_{3,3}\}, \{b_{0,2}, c_{2,2}\}\}$.


\begin{figure}[h]
\begin{center}
\begin{tikzpicture}[scale = 0.5]

\draw[line width = 2 pt, red] (2,7.5) -- (3,9);  
\draw[line width = 1 pt] (5,9) -- (6,7.5); 
\draw[line width = 2 pt, red] (6,10.5) -- (5,9);

\draw[line width = 1 pt] (6,10.5) -- (8,10.5);
\node at (7,11) {$\alpha$};

\draw[line width = 2 pt, red] (8,10.5) -- (9,9);
\draw[line width = 1 pt] (9,9) -- (8,7.5);

\draw[line width = 1 pt] (9,9) -- (11,9);
\draw[line width = 2 pt, red] (12,7.5) -- (11,9);
\draw[line width = 1 pt] (12,7.5) -- (11,6);
\draw[line width = 1 pt] (9,6) -- (11,6);

\draw[line width = 1 pt] (11,6) -- (12,4.5);
\draw[line width = 1 pt] (12,4.5) -- (11,3);

\draw[line width = 1 pt] (11,3) -- (9,3);

\draw[line width = 1 pt] (6,7.5) -- (8,7.5); 
\node at (7,8) {$\beta$};

\draw[line width = 1 pt] (3,6) -- (5,6); 
\draw[line width = 1 pt] (5,6) -- (6,4.5); 
\draw[line width = 1 pt] (8,4.5) -- (9,6); 
\draw[line width = 1 pt] (8,4.5) -- (9,3); 
\draw[line width = 1 pt] (9,3) -- (8,1.5); 
\draw[line width = 1 pt] (8,1.5) -- (6,1.5);
\node at (7,2) {$\delta$};

\draw[line width = 1 pt] (5,3) -- (6,4.5);
\draw[line width = 1 pt] (3,3) -- (5,3);
\draw[line width = 1 pt] (2,4.5) -- (3,6);

\draw[line width = 1 pt] (3,9) -- (5,9); 
\draw[line width = 1 pt] (3,6) -- (2,7.5); 
\draw[line width = 1 pt] (3,3) -- (2, 4.5); 
\draw[line width = 2 pt, red] (5,6) -- (6, 7.5); 
\draw[line width = 2 pt, red] (9,6) -- (8, 7.5); 
\draw[line width = 1 pt] (6,4.5) -- (8, 4.5); 
\node at (7,5) {$\gamma$};

\draw[line width = 1 pt] (6, 1.5) -- (5,3); 

\node at (5, 10) {$b_{1,3}$}; 
\node at (5, 7) {$b_{1,2}$}; 
\node at (5, 5) {$c_{1,1}$}; 
\node at (5, 2) {$c_{1,0}$};

\node at (2, 3.5) {$c_{0,0}$};
\node at (2, 8.5) {$b_{0,2}$};

\node at (9, 10) {$c_{2,3}$}; 
\node at (9, 7) {$c_{2,2}$}; 
\node at (9.2, 5) {$b_{2,2}$}; 
\node at (9.2, 2) {$b_{2,1}$};

\node at (12.2, 3.5) {$b_{3,2}$};
\node at (12, 8.5) {$c_{3,3}$};

\node at (4,6.27) {$a_{1,1}$};
\node at (10,6.27) {$a_{3,2}$};

\end{tikzpicture}
\end{center}
\caption{The highlighted edges used in Case 2.2} \label{fig:second_intersection}
\end{figure}
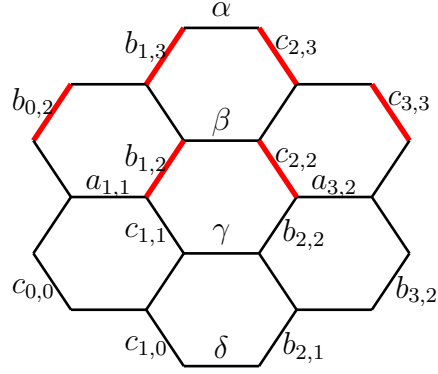


Finally, we discuss Case 3 with element pairing using $\delta$.

\textbf{Case 3.1: Suppose $\theta$ is of type $(t2.1)$ and $\delta \not \in \theta$.} That is, $\theta = \sigma \cup \beta$ where $\alpha, \beta, \gamma, \delta \not \in \sigma$, $\sigma \in 
\begin{tabular}{l|l}
$2$ & $\ast$\\ \hline
$\ast$ & $\ast$ 
\end{tabular}$, 
$ \sigma \in 
\begin{tabular}{l|l}
$1$ & $\ast$\\ \hline
$\ast$ & $\ast$ 
\end{tabular}$, 
$\sigma \not \in 
\begin{tabular}{l|l}
$2$ & $2$\\ \hline
$2$ & $2$ 
\end{tabular}$, and 
($\sigma \not \in 
\begin{tabular}{l|l}
$2$ & $\ast$\\ \hline
$\ast$ & $1$ 
\end{tabular}$ 
or 
$\sigma \not \in 
\begin{tabular}{l|l}
$1$ & $1$\\ \hline
$1$ & $1$ 
\end{tabular}$).
If $\theta\cup \delta=\sigma\cup\delta\cup\beta$ is also of type $(t2.1)$, 
then $\theta$ and $\theta\cup \delta$ are paired using $\delta$.  
So assume $\sigma\cup\delta\cup\beta$ is not of type $(t2.1)$.
Then
 $\sigma \cup \delta \not \in 
\begin{tabular}{l|l}
$2$ & $\ast$\\ \hline
$\ast$ & $\ast$ 
\end{tabular}$
or
$ \sigma \cup \delta \not \in 
\begin{tabular}{l|l}
$1$ & $\ast$\\ \hline
$\ast$ & $\ast$ 
\end{tabular}$ 
or
$\sigma \cup \delta \in 
\begin{tabular}{l|l}
$2$ & $2$\\ \hline
$2$ & $2$ 
\end{tabular}$
or
($\sigma \cup \delta \in 
\begin{tabular}{l|l}
$2$ & $\ast$\\ \hline
$\ast$ & $1$ 
\end{tabular}$ 
and 
$\sigma \cup \delta \in 
\begin{tabular}{l|l}
$1$ & $1$\\ \hline
$1$ & $1$ 
\end{tabular}$).
Note that by observation (4) $\sigma\cup\delta$ cannot be in 
$\begin{tabular}{l|l}
$2$ & $2$\\ \hline
$2$ & $2$ 
\end{tabular}$ or 
$\begin{tabular}{l|l}
$1$ & $1$\\ \hline
$1$ & $1$ 
\end{tabular}$.
Therefore, we see that all unpaired faces are such that $\sigma \in 
\begin{tabular}{l|l}
$2$ & $\ast$\\ \hline
$\ast$ & $\ast$ 
\end{tabular}$, 
and 
$\sigma \in 
\begin{tabular}{l|l}
$1$ & $\ast$\\ \hline
$\ast$ & $\ast$ 
\end{tabular}$
and 
$\sigma \not \in 
\begin{tabular}{l|l}
$2$ & $2$\\ \hline
$2$ & $2$ 
\end{tabular}$, and 
($\sigma \not \in 
\begin{tabular}{l|l}
$2$ & $\ast$\\ \hline
$\ast$ & $1$ 
\end{tabular}$ 
or 
$\sigma \not \in 
\begin{tabular}{l|l}
$1$ & $1$\\ \hline
$1$ & $1$ 
\end{tabular}$) 
and 
($\sigma \not \in 
\begin{tabular}{l|l}
$2$ & $\ast$\\ \hline
$\ast$ & $0$ 
\end{tabular}$
or 
$\sigma \not \in 
\begin{tabular}{l|l}
$1$ & $\ast$\\ \hline
$\ast$ & $0$ 
\end{tabular}$). Although there are four possible subcases to consider, we see that it is not possible for $\sigma \in 
\begin{tabular}{l|l}
$2$ & $\ast$\\ \hline
$\ast$ & $\ast$ 
\end{tabular}$, 
$\sigma \not \in 
\begin{tabular}{l|l}
$2$ & $2$\\ \hline
$2$ & $2$ 
\end{tabular}$, 
$\sigma \not \in 
\begin{tabular}{l|l}
$2$ & $\ast$\\ \hline
$\ast$ & $1$ 
\end{tabular}$, 
and $\sigma \not \in 
\begin{tabular}{l|l}
$2$ & $\ast$\\ \hline
$\ast$ & $0$ 
\end{tabular}$, which rules out one subcase. Similarly, it is not possible for 
$\sigma \in 
\begin{tabular}{l|l}
$1$ & $\ast$\\ \hline
$\ast$ & $\ast$ 
\end{tabular}$, 
$\sigma \not \in 
\begin{tabular}{l|l}
$1$ & $1$\\ \hline
$1$ & $1$ 
\end{tabular}$, 
and 
$\sigma \not \in 
\begin{tabular}{l|l}
$1$ & $\ast$\\ \hline
$\ast$ & $0$ 
\end{tabular}$. So we are left with two possible subcases, both of which lead to no unpaired faces. 

\underline{\textit{Subcase 1:}} Suppose that 
$\sigma \in 
\begin{tabular}{l|l}
$2$ & $\ast$\\ \hline
$\ast$ & $\ast$ 
\end{tabular}$, 
$\sigma \in 
\begin{tabular}{l|l}
$1$ & $\ast$\\ \hline
$\ast$ & $\ast$ 
\end{tabular}$
$\sigma \not \in 
\begin{tabular}{l|l}
$2$ & $2$\\ \hline
$2$ & $2$ 
\end{tabular}$, 
$\sigma \not \in 
\begin{tabular}{l|l}
$2$ & $\ast$\\ \hline
$\ast$ & $1$ 
\end{tabular}$,
and 
$\sigma \not \in 
\begin{tabular}{l|l}
$1$ & $\ast$\\ \hline
$\ast$ & $0$ 
\end{tabular}$. 
Then, it follows that $\sigma \in 
\begin{tabular}{l|l}
$2$ & $\ast$\\ \hline
$\ast$ & $0$ 
\end{tabular}$
and 
$\sigma \in 
\begin{tabular}{l|l}
$1$ & $1$\\ \hline
$1$ & $1$ 
\end{tabular}$. We now notice that the intersection of the perfect matchings of 
$\begin{tabular}{l|l}
$1$ & $1$\\ \hline
$1$ & $1$ 
\end{tabular}$
and 
$\begin{tabular}{l|l}
$2$ & $\ast$\\ \hline
$\ast$ & $0$ 
\end{tabular}$
is contained in a perfect matching of
$\begin{tabular}{l|l}
$2$ & $\ast$\\ \hline
$\ast$ & $1$ 
\end{tabular}$, but by assumption 
$\sigma \not \in 
\begin{tabular}{l|l}
$2$ & $\ast$\\ \hline
$\ast$ & $1$ 
\end{tabular}$ so there are no unpaired faces. 

\underline{\textit{Subcase 2:}}
Suppose now that 
$\sigma \in 
\begin{tabular}{l|l}
$2$ & $\ast$\\ \hline
$\ast$ & $\ast$ 
\end{tabular}$, 
$\sigma \in 
\begin{tabular}{l|l}
$1$ & $\ast$\\ \hline
$\ast$ & $\ast$ 
\end{tabular}$,
$\sigma \not \in 
\begin{tabular}{l|l}
$2$ & $2$\\ \hline
$2$ & $2$ 
\end{tabular}$, 
$\sigma \not \in 
\begin{tabular}{l|l}
$1$ & $1$\\ \hline
$1$ & $1$ 
\end{tabular}$, and
$\sigma \not \in 
\begin{tabular}{l|l}
$2$ & $\ast$\\ \hline
$\ast$ & $0$ 
\end{tabular}$.
Then it follows that 
$\sigma \in 
\begin{tabular}{l|l}
$2$ & $\ast$\\ \hline
$\ast$ & $1$ 
\end{tabular}$
and 
$\sigma \in 
\begin{tabular}{l|l}
$1$ & $\ast$\\ \hline
$\ast$ & $0$. 
\end{tabular}$
The intersection of the perfect matchings of 
$\begin{tabular}{l|l}
$2$ & $\ast$\\ \hline
$\ast$ & $1$ 
\end{tabular}$ and 
$\begin{tabular}{l|l}
$1$ & $\ast$\\ \hline
$\ast$ & $0$ 
\end{tabular}$ 
is contained in the perfect matching 
$\begin{tabular}{l|l}
$1$ & $1$\\ \hline
$1$ & $1$ 
\end{tabular}$. Since this is a contradiction to the assumption that $\sigma \not \in 
\begin{tabular}{l|l}
$1$ & $1$\\ \hline
$1$ & $1$ 
\end{tabular}$, there are no unpaired faces left from this case. 

\textbf{Case 3.2: Suppose $\theta$ is of type $(t2.1)$ and that $\delta \in \theta$.} That is, $\theta = \sigma \cup \delta \cup 
\beta$ where $\alpha, \beta, \gamma, \delta \not \in \sigma$ and 
$\sigma \cup \delta \in 
\begin{tabular}{l|l}
$2$ & $\ast$\\ \hline
$\ast$ & $\ast$ 
\end{tabular}$, 
$ \sigma \cup \delta  \in 
\begin{tabular}{l|l}
$1$ & $\ast$\\ \hline
$\ast$ & $\ast$ 
\end{tabular}$, 
$\sigma \cup \delta \not \in
\begin{tabular}{l|l}
$2$ & $2$\\ \hline
$2$ & $2$ 
\end{tabular}$, and 
($\sigma \cup \delta \not \in 
\begin{tabular}{l|l}
$2$ & $\ast$\\ \hline
$\ast$ & $1$ 
\end{tabular}$ 
or 
$\sigma \cup \delta \not \in 
\begin{tabular}{l|l}
$1$ & $1$\\ \hline
$1$ & $1$ 
\end{tabular}$).
If $\theta\setminus\delta=\sigma\cup \beta$ is also of type $(t2.1)$, then 
$\theta$ and $\theta \setminus\delta$ are paired using $\delta$.
So assume 
$\sigma\cup \beta$ is not of type $(t2.1)$.
Therefore, 
 $\sigma \not \in 
\begin{tabular}{l|l}
$2$ & $\ast$\\ \hline
$\ast$ & $\ast$ 
\end{tabular}$,
or
$ \sigma \not \in 
\begin{tabular}{l|l}
$1$ & $\ast$\\ \hline
$\ast$ & $\ast$ 
\end{tabular}$, 
or
$\sigma \in 
\begin{tabular}{l|l}
$2$ & $2$\\ \hline
$2$ & $2$ 
\end{tabular}$,
or
($\sigma \in 
\begin{tabular}{l|l}
$2$ & $\ast$\\ \hline
$\ast$ & $1$ 
\end{tabular}$ 
and 
$\sigma \in 
\begin{tabular}{l|l}
$1$ & $1$\\ \hline
$1$ & $1$ 
\end{tabular}$). Notice that it is not possible for 
$\sigma \not \in 
\begin{tabular}{l|l}
$2$ & $\ast$\\ \hline
$\ast$ & $\ast$ 
\end{tabular}$
since $\sigma \cup \delta \in 
\begin{tabular}{l|l}
$2$ & $\ast$\\ \hline
$\ast$ & $\ast$ 
\end{tabular}$ and, similarly, it is not possible for 
$\sigma \not \in 
\begin{tabular}{l|l}
$1$ & $\ast$\\ \hline
$\ast$ & $\ast$ 
\end{tabular}$
since 
$\sigma \cup \delta 
\in 
\begin{tabular}{l|l}
$1$ & $\ast$\\ \hline
$\ast$ & $\ast$ 
\end{tabular}$. Hence, the faces left unpaired are such that 
$\sigma \in 
\begin{tabular}{l|l}
$2$ & $\ast$\\ \hline
$\ast$ & $0$ 
\end{tabular}$, 
and 
$\sigma \in 
\begin{tabular}{l|l}
$1$ & $\ast$\\ \hline
$\ast$ & $0$ 
\end{tabular}$, and 
($\sigma \in 
\begin{tabular}{l|l}
$2$ & $2$\\ \hline
$2$ & $2$ 
\end{tabular}$ or 
($\sigma \in 
\begin{tabular}{l|l}
$2$ & $\ast$\\ \hline
$\ast$ & $1$ 
\end{tabular}$
and 
$\sigma \in 
\begin{tabular}{l|l}
$1$ & $1$\\ \hline
$1$ & $1$ 
\end{tabular}$)). There are two subcases to consider: 

\underline{\textit{Subcase 1:}} Suppose first that $\sigma \in 
\begin{tabular}{l|l}
$2$ & $2$\\ \hline
$2$ & $2$ 
\end{tabular}$. 
Then, $\sigma$ is in the intersection between the perfect matchings of 
$\begin{tabular}{l|l}
$2$ & $\ast$\\ \hline
$\ast$ & $0$ 
\end{tabular}$, 
$\begin{tabular}{l|l}
$1$ & $\ast$\\ \hline
$\ast$ & $0$ 
\end{tabular}$, and 
$\begin{tabular}{l|l}
$2$ & $2$\\ \hline
$2$ & $2$ 
\end{tabular}$. 
Therefore the unpaired faces are $\sigma \cup \beta \cup \delta$ where\\ 
$\sigma \in \{\emptyset, \{c_{0,0}\}, \{c_{1,1}\}, \{b_{3,2}\}, \{b_{2,2}\}, \{c_{0,0},b_{3,2}\}, \{c_{0,0}, b_{2,2}\}, \{c_{1,1}, b_{3,2}\}, \{c_{1,1}, b_{2,2}\} \}$.

\underline{\textit{Subcase 2:}} Suppose now that 
$\sigma \in 
\begin{tabular}{l|l}
$2$ & $\ast$\\ \hline
$\ast$ & $1$ 
\end{tabular}$
and 
$\sigma \in 
\begin{tabular}{l|l}
$1$ & $1$\\ \hline
$1$ & $1$ 
\end{tabular}$. The intersection of 
$\begin{tabular}{l|l}
$2$ & $\ast$\\ \hline
$\ast$ & $0$ 
\end{tabular}$, 
$\begin{tabular}{l|l}
$1$ & $\ast$\\ \hline
$\ast$ & $0$ 
\end{tabular}$, 
$\begin{tabular}{l|l}
$2$ & $\ast$\\ \hline
$\ast$ & $1$ 
\end{tabular}$, and 
$\begin{tabular}{l|l}
$1$ & $1$\\ \hline
$1$ & $1$ 
\end{tabular}$ is 
$\mathcal{P}(b_{0,2}, a_{1,1}, c_{0,0}, c_{3,3}, a_{3,2}, b_{3,2})$.

Therefore, the unpaired faces obtained from Case 3.2 are those of the form $\sigma \cup \beta \cup \delta$ where 
 \(\sigma \in 
\mathcal{P}(b_{0,2}, a_{1,1}, c_{0,0}, c_{3,3}, a_{3,2}, b_{3,2}) \cup 
\{\{c_{1,1}\}, \{b_{2,2}\}, \{c_{0,0},b_{2,2}\}, \) 
\( \{c_{1,1}, b_{3,2}\}, \{c_{1,1}, b_{2,2}\} \}.\)

We are now left with three types of unpaired faces: 
\begin{enumerate}
    \item[(1)] $\sigma \cup \beta \cup \gamma$ where $\sigma \in \mathcal{P}(c_{0,0},b_{3,2},c_{1,0}, b_{2,1})$ (Case 4.2)
    \item[(2)] $\sigma \cup 
\gamma \cup \delta$ where\\
$\sigma \in \mathcal{P}(b_{0,2}, b_{1,3}, c_{3,3}, c_{2,3}) \cup \{ \{b_{1,2}\}, \{c_{2,2}\}, \{b_{1,2},c_{2,2}\}, \{b_{1,2},c_{3,3}\}, \{b_{0,2},c_{2,2}\} \}$ (Case 2.2)
\item[(3)] $\sigma \cup \beta \cup \delta$ where\\ 
$ \sigma \in \mathcal{P}(b_{0,2}, a_{1,1}, c_{0,0}, c_{3,3}, a_{3,2}, b_{3,2}) \cup 
\{\{c_{1,1}\}, \{b_{2,2}\}, \{c_{0,0},b_{2,2}\}, 
 \{c_{1,1}, b_{3,2}\}, \{c_{1,1}, b_{2,2}\} \}.$ 

(Case 3.2)

\end{enumerate}

Pairing with $c_{1,0}$ matches all faces in (1). Then, pairing with $b_{0,2}$ leaves the faces: \begin{enumerate}
    \item[(2)] $\sigma \cup 
\gamma \cup \delta$ where $\sigma \in \{ \{b_{1,2}\}, \{b_{1,2},c_{2,2}\}, 
\{b_{1,2},c_{3,3}\} \}$
\item[(3)] $\sigma \cup \beta \cup \delta$ where 
$\sigma \in 
\{\{c_{1,1}\}, \{b_{2,2}\},  \{c_{0,0}, b_{2,2}\}, \{c_{1,1}, b_{3,2}\}, \{c_{1,1}, b_{2,2}\} \}.$

\end{enumerate}

Finally we perform a small series of element pairings. 
The element pairing using $c_{3,3}$ pairs $\{b_{1,2} \}$ with 
$\{b_{1,2}, c_{3,3}\}$, 
 an element pairing using $c_{0,0}$ pairs $\{b_{2,2}\}$ with $\{c_{0,0},b_{2,2}\}$, and an element pairing using $b_{3,2}$ pairs $c_{1,1}$ with $\{c_{1,1}, b_{3,2} \}$. This leaves us with two critical cells $\{b_{1,2}, c_{2,2}\} \cup \delta \cup \gamma$ and $\{c_{1,1},b_{2,2}\} \cup \beta \cup \delta$ and the homotopy type $S^3 \vee S^3$.  
\end{proof}

\section{Conclusion and further directions}
Throughout this paper, our main guiding question has been, are the perfect matching complexes of honeycomb graphs all contractible or homotopy equivalent to a wedge of spheres? We have considered  the homotopy type of the perfect matching complexes of honeycomb graphs $H_{\ell \times m\times n}$.  When $\ell = 1$, we were able to compute all homtopy types, but for $\ell\ge 2$, the only homotopy type we were able to compute is for $H_{2\times 2\times 2}$.  It appears, at this time, that for larger honeycomb graphs, we need a new strategy. 

%
%
%
%
%

Our motivation for this project has been to better understand (ordinary) 
matching complexes of honeycomb graphs, $\match (H_{k \times m \times n})$.
These complexes include faces corresponding to matchings that are not contained in any perfect matching; thus
they are more complicated than perfect matching complexes.
For example, Matsushita  (\cite{matsushita2019}) showed that the matching 
complex of the honeycomb graphs $H_{1,1,n}$ has the homotopy type of a wedge of
spheres, as part of a more general result on polygonal line tilings.  
This contrasts with our result on these graphs: if you consider only the 
subcomplex of perfect matchings, the complex is contractible (for $n\ge 2$).

We are interested more generally in the relationship between the (ordinary)
matching complex and the perfect matching complex.  
Our curiosity in (ordinary) matching complexes has not diminished.
Are the matching complexes of honeycomb graphs contractible or homotopy 
equivalent to wedges of spheres? What can we say about the (ordinary or perfect)
matching complexes of more general classes of graphs, such as bipartite graphs?

\appendix

\section{Perfect matchings of $H_{2\times 2\times 2}$}



\bibliography{refs}

\begin{thebibliography}{10}

\bibitem{adamaszek-split}
Micha\l\ Adamaszek.
\newblock Splittings of independence complexes and the powers of cycles.
\newblock {\em J. Combin. Theory Ser. A}, 119(5):1031--1047, 2012.

\bibitem{adamaszek-stacho}
Micha\l\ Adamaszek and Juraj Stacho.
\newblock Complexity of simplicial homology and independence complexes of
  chordal graphs.
\newblock {\em Comput. Geom.}, 57:8--18, 2016.

\bibitem{bayer}
Margaret Bayer, Bennet Goeckner, and Marija Jeli\'c~Milutinovi\'c.
\newblock Manifold matching complexes.
\newblock {\em Mathematika}, 66:973--1002, 2020.

\bibitem{bjv-line}
Margaret Bayer, Marija Jeli\'{c}~Milutinovi\'{c}, and Julianne Vega.
\newblock General polygonal line tilings and their matching complexes.
\newblock In preparation 2022.

\bibitem{bjorner-topol}
A.~Bj\"{o}rner.
\newblock Topological methods.
\newblock In {\em Handbook of combinatorics, {V}ol. 1, 2}, pages 1819--1872.
  Elsevier Sci. B. V., Amsterdam, 1995.

\bibitem{Braun_Hough}
Benjamin Braun and Wesley~K. Hough.
\newblock Matching and independence complexes related to small grids.
\newblock {\em Electron. J. Combin.}, 24(4):Paper 4.18, 20, 2017.

\bibitem{higher_ind}
Priyavrat Deshpande and Anurag Singh.
\newblock Higher independence complexes of graphs and their homotopy type.
\newblock {\em J. Ramanujan Math. Society}, 36(1), 2021.

\bibitem{ehrenborg}
Richard Ehrenborg and G\'{a}bor Hetyei.
\newblock The topology of the independence complex.
\newblock {\em European J. Combin.}, 27(6):906--923, 2006.

\bibitem{engstrom}
Alexander Engstr\"{o}m.
\newblock Complexes of directed trees and independence complexes.
\newblock {\em Discrete Math.}, 309(10):3299--3309, 2009.

\bibitem{forman2002user}
Robin Forman.
\newblock A user’s guide to discrete {M}orse theory.
\newblock {\em S{\'e}m. Lothar. Combin}, 48:35pp, 2002.

\bibitem{goyal2021matching}
Shuchita Goyal, Samir Shukla, and Anurag Singh.
\newblock Matching complexes of {$3 \times n$} grid graphs.
\newblock {\em Electron. J. Combin.}, 28(4):Paper No. 4.16, 26, 2021.

\bibitem{JelicEtAl}
Marija Jeli\'c~Milutinovi\'c, Helen Jenne, Alex McDonough, and Julianne Vega.
\newblock Matching complexes of trees and applications of the matching tree
  algorithm.
\newblock arxiv:1905.10560; to appear in {\em Annals of Combinatorics}.

\bibitem{jonsson-book}
Jakob Jonsson.
\newblock {\em Simplicial complexes of graphs}, volume 1928 of {\em Lecture
  Notes in Mathematics}.
\newblock Springer-Verlag, Berlin, 2008.

\bibitem{klein}
D.~J. Klein, G.~E. Hite, W.~A. Seitz, and T.~G. Schmalz.
\newblock Dimer coverings and {K}ekul\'e structures on honeycomb lattice
  strips.
\newblock {\em Theoretica Chimica Acta}, 69(5--6):357--368, 1986.

\bibitem{kozlov-directed}
Dmitry~N. Kozlov.
\newblock Complexes of directed trees.
\newblock {\em J. Combin. Theory Ser. A}, 88(1):112--122, 1999.

\bibitem{kuperberg}
Greg Kuperberg.
\newblock Symmetries of plane partitions and the permanent-determinant method.
\newblock {\em J. Combin. Theory Ser. A}, 68(1):115--151, 1994.

\bibitem{MR2475001}
F.~Larri\'{o}n, M.~A. Piza\~{n}a, and R.~Villarroel-Flores.
\newblock The clique operator on matching and chessboard graphs.
\newblock {\em Discrete Math.}, 309(1):85--93, 2009.

\bibitem{li}
Wei Li.
\newblock On the matching and permanental polynomials of graphs.
\newblock {\em Discrete Appl. Math.}, 302:16--23, 2021.

\bibitem{matsushita2018}
Takahiro Matsushita.
\newblock Matching complexes of small grids.
\newblock {\em Electron. J. Combin.}, 26(3):Paper No. 3.1, 8, 2019.

\bibitem{matsushita2019}
Takahiro Matsushita.
\newblock Matching complexes of polygonal line tilings.
\newblock {\em arXiv preprint arXiv:1910.00186; {\em to appear in} Hokkaido
  Mathematical Journal}, 2022.

\bibitem{matsushita2022}
Takahiro Matsushita and Shun Wakatsuki.
\newblock Independence complexes of $(n \times 4)$ and $(n \times 5)$-grid
  graphs.
\newblock {\em arXiv:2203.16391}, 2022.

\bibitem{nikseresht}
Ashkan Nikseresht.
\newblock Gorenstein and {C}ohen-{M}acaulay matching complexes, 2022.
\newblock ar{X}iv:2106.13993[math.AC], to appear in {\em Algebra and its
  Applications}.

\bibitem{valiant}
L.~G. Valiant.
\newblock The complexity of computing the permanent.
\newblock {\em Theoret. Comput. Sci.}, 8(2):189--201, 1979.

\bibitem{Vega}
Julianne Vega.
\newblock Two-matching complexes.
\newblock Submitted, arxiv:1909.10406.

\bibitem{wachs}
Michelle~L. Wachs.
\newblock Topology of matching, chessboard, and general bounded degree graph
  complexes.
\newblock {\em Algebra Universalis}, 49(4):345--385, 2003.
\newblock Dedicated to the memory of Gian-Carlo Rota.

\end{thebibliography}
\end{document}